\newtheorem{theorem}{Theorem}
\newtheorem{definition}[theorem]{Definition}
\newtheorem{proposition}[theorem]{Proposition}
\newtheorem{remark}[theorem]{Remark}
\newcommand{\E}{\mathbb{E}}
\newcommand{\R}{\mathbb{R}}
\newcommand{\Ff}{\mathbb{F}}
\newcommand{\Lf}{\mathbb{L}}
\newcommand{\Ec}{\mathcal{E}}
\newcommand{\F}{\mathcal{F}}
\newcommand{\B}{\mathcal{B}}
\newcommand{\cS}{\mathcal{S}}
\newcommand{\hH}{\mathcal{H}}
\newcommand{\cK}{\mathcal{K}}
\newcommand{\Pf}{\mathbb{P}}
\def\as{\hbox{\rm -a.s.{ }}}
\def\d{\delta}             
\begin{document}

\title[Reflected Backward Stochastic Volterra Integral Equations]{Reflected Backward Stochastic Volterra Integral Equations and related time-inconsistent optimal stopping problems}
\author{Nacira Agram and Boualem Djehiche}

\address{Department of Mathematics, Linnaeus University (LNU), V{\"a}xj{\"o}, Sweden}
\email{nacira.agram@lnu.se}

\address{Department of Mathematics \\ KTH Royal Institute of Technology \\ 100 44, Stockholm \\ Sweden}
\email{boualem@kth.se}

\date{ \today}

\subjclass[2010]{60H20 , 60H07,  93E20, 49K22}

\keywords{ Backward stochastic differential equation, Snell envelope, Volterra integral equation, time-inconsistent optimal stopping problem}

\begin{abstract}
We study  solutions of a class of  one-dimensional continuous reflected backward stochastic Volterra integral equations driven by Brownian motion, where the reflection keeps the solution above a given stochastic process (lower obstacle). We prove existence and uniqueness by a fixed point argument and we derive a comparison result. Moreover, we show how the solution of our problem is related to a time-inconsistent optimal stopping problem and derive an optimal strategy.
\end{abstract}

\maketitle

\tableofcontents

\section{Introduction}

In recent years backward stochastic Volterra integral equations (BSVIEs) have attracted a lot of interest due to their modeling potential in problems related to  time-preferences of decision makers, asset allocation and risk management in mathematical finance among other fields, for which the related optimal control problems are time-inconsistent meaning that  the associate value-function does not satisfy the dynamic programming principle.

Lin \cite{lin} was first to introduce and study a class of BSVIEs driven by Brownian motion. They can be described as follows: For a given Lipschitz driver $f$ and a square integrable terminal condition $\xi$ solving a BSVIE consists in finding an adapted process $(Y,Z)$ satisfies the equation
\[
 Y(t)=\xi+\int_{t}^{T}f(t,s,Y(s),Z(t,s))ds-\int_{t}^{T} Z(t,s)dW(s).
\]
In a series of papers, Yong \cite{Yong1, Yong2, Yong3} systematically studied a general class, including the M-solution, of BSVIEs of the form:
\[
 Y(t)=\xi(t)+\int_{t}^{T}f(t,s,Y(s),Z(t,s),Z(s,t))ds-\int_{t}^{T} Z(t,s)dW(s).
\]
This was followed by a quite extensive list of papers including (in alphabetic order) Djordjevi{\'c} and Jankovi{\'c} \cite{DJ1,DJ2}, Shi, Wang and Yong \cite{swy}, Wang and Yong \cite{wy}, Wang and Zhang \cite{WZ}, Hu and {\O}ksendal \cite{HO}, Wang and Yong \cite{wy}, Popier \cite{P} and Wang, Yong and Zhang \cite{wyz}, to mention a few.

In problems related to mathematical finance, the BSVIE is satisfied by the value function of  time-inconsistent optimal control and equilibrium problems related to stochastic differential utility such as in  Di Persio \cite{dip}, dynamic risk measures such as in Yong \cite{Yong2}, Wang and Yong \cite{wy} and Agram \cite{Ag} and  dynamic capital allocations such as in  Kromer and Overbeck \cite{KO}. The BSVIE is also satisfied by the adjoint process of a related stochastic maximum principle related to equilibrium recursive utility and equilibrium dynamic risk measures, such as in Djehiche and Huang \cite{DH}, Wang, Sun and Yong \cite{wsy}  among many more papers (see \cite{wyz} for further references).

The purpose of the present work is to prove, under certain conditions,  existence and uniqueness of solutions $(Y,Z,K)$ of reflected BSVIEs of the form
\[
Y(t)=\xi(t)+\int_{t}^{T}f(t,s,Y(s),Z(t,s))ds+\int_t^TK(t,ds)-\int_{t}^{T} Z(t,s)dW(s),
\]
where $W$ is Brownian motion. The equation is called reflected because the first component $Y$ of the solution is constrained to be greater than or equal to a given obstacle process $L$. Standard reflected backward stochastic differential equations (BSDEs) were initially studied in the seminal paper by El Karoui {\it et al.} \cite{elkaroui}. In this present paper we investigate to what extent  the known results on standard reflected BSDEs carry over to the Volterra type of reflected BSVIEs, especially which type of Skorohod flatness condition suites this class of processes.  We also show that the solutions to this class of reflected BSVIEs are in fact the value functions of optimal stopping problems related to time-inconsistent stochastic utility functions and we derive an optimal stopping strategy. 

The content of our paper is as follows. After some preliminaries in Section \ref{F}, we give a formulation of the class of continuous reflected BSVIEs with lower obstacle. In Section \ref{EU} we derive existence, uniqueness and comparison results. Finally, in Section \ref{OS}, we give an application to time-inconsistent optimal stopping problems.

\subsection*{Notation and preliminaries}\label{prem}
Let $(\Omega,\F,\Pf)$ be a complete probability space on which is defined a standard one-dimensional Brownian motion $W=(W(t))_{0\le t\leq T}$. We denote by $\mathbb{F}:=(\mathcal{F}_{t})_{0\le t\leq T}$ its natural filtration augmented by all the $\Pf$-null sets in $\F$.  

Let $\mathbb{J}$ be a nonempty set and consider two processes $X=(X_s, \,s\in \mathbb{J})$ and $Y=(Y_s,\, s\in \mathbb{J})$ defined on the same probability space $(\Omega,\F,\Pf)$.   We say that $X$ is a {\it modification} of $Y$ if  for each $s\in \mathbb{J}$ , 
$$
\Pf( \omega\in \Omega: \, \, X_s(\omega)=Y_s(\omega))=1.
$$
We say that $X$ and $Y$ are {\it indistinguishable} if 
$$
\Pf( \omega\in \Omega: \, \, X_s(\omega)=Y_s(\omega);\,\,\forall s\in \mathbb{J})=1.
$$
If both $X$ and $Y$ are a.s. continuous then if  $X$ is a modification of $Y$, they are indistinguishable.

Finally, we  let $\B(G)$ be the Borel $\sigma$-field of the metric space $G$. In the sequel, $C>0$ represents a generic constant which can be different from line to line.

\medskip In this paper we only consider reflected BSVIEs driven by a one-dimensional Brownian motion. Extension to higher dimensions being straightforward.

\medskip
We define the following spaces for the solution.

\medskip
\begin{itemize}
\item $\cS^{2}$ is the set of ${\mathbb{R}}$-valued $\mathbb{F}
$-adapted processes $(Y(u))_{u\in[0,T]}$ such that
\[
{\Vert Y\Vert}_{\mathcal{S}^{2}}^{2}:=\E[\sup_{t\in\lbrack
0,T]}|Y(t)|^{2}]~<~\infty\;.
\]

\item $\hH^2$ is the space of $\Ff$-adapted processes $(v(u))_{u\in[0,T]}$ such that $$
\|v\|^2_{\hH^2}:=\E\left[\int_0^T|v(s)|^2ds\right]<\infty.
$$

\item $\Lf^{2}$ is the set of ${\R}$-valued  processes $(Z(t,s))_{(t,s)\in [0,T]\times [0,T]}$ such that
for almost all $t\in[0,T]\,\,\, Z(t,\cdot)\in\hH^2$ and satisfy
\[
\Vert Z\Vert_{\Lf^{2}}^{2}:=\E\left[\int_{0}^{T}\int_{t}^{T}|Z(t,s)|^{2}dsdt\right]<~\infty\;.
\]
\item  $\cK^2$ is the space of processes $K$  which satisfy
\begin{itemize}
\item[$\bullet$] for each $t\in [0,T],\, u\mapsto K(t,u)$ is an $\Ff$-adapted and increasing process with $K(t, 0)=0$;
\item [$\bullet$] $(t,u)\mapsto K(t,u)$ is continuous and  $K(\cdot,T)\in\hH^2$.
\end{itemize}
\end{itemize}

\medskip The spaces $(\cS^2, {\Vert \cdot\Vert}_{\mathcal{S}^{2}}^{2}), (\hH^2, {\Vert \cdot\Vert}_{\hH^{2}})$ and $(\Lf^2, \|\cdot\|_{\Lf^2})$ are Hilbert spaces. 

\section{Formulation of the problem}\label{F}
We investigate existence of a unique triple $(Y,Z,K)$ of processes taking values in $\R\times\R\times\R^+$ which satisfy the following reflected BSVIE with one obstacle associated with $(f,\xi,L)$:
\begin{equation}\label{rbsde-1}
Y(t)=\xi(t)+\int_{t}^{T}f(t,s,Y(s),Z(t,s))ds+\int_t^TK(t,ds)-\int_{t}^{T} Z(t,s)dW(s),
\end{equation}
such that
\begin{itemize}
\item[(a)] $Y\in\hH^2,\,t\mapsto Y(t)$ is continuous and $Z\in\Lf^2$; 
\item[(b)] $Y(t)\ge L(t)\quad \Pf\as,\quad 0\le t\le T$;
\item[(c)] The process $K$ enjoys the following properties: 
\begin{itemize}
\item[(c1)] $K\in\cK^2$ i.e. for each $t\in [0,T],\, u\mapsto K(t,u)$ is an increasing process with $K(t, 0)=0$, $(t,u)\mapsto K(t,u)$ is continuous and  $K(\cdot,T)\in\hH^2$; 
\item[(c2)]  The Skorohod flatness condition holds:  
for each $0\le \alpha<\beta \le T$,
$$\begin{array}{lll}
K(u,\alpha)=K(u,\beta) \quad\text{whenever}\,\, Y(u)>L(u) \,\, \text{for each}\,\,\, u\in[\alpha, \beta] \quad \Pf\as  
\end{array}
$$
\end{itemize}
\end{itemize}

\medskip
\begin{definition}\label{rbsvie} We say that the triple of progressively measurable processes $(Y,Z,K)$ is a
solution of the reflected BSVIE associated with $(f,\xi,L)$ if it satisfies \eqref{rbsde-1} along with the properties (a), (b) and (c).
\end{definition}

\medskip
\begin{remark}
In the case of a standard reflected BSDEs, the above Skorohod flatness condition (c2) is equivalent to 
$$
\int_{0}^{T}(Y(s)-L(s))\,dK(s)=0,\quad \Pf\as
$$
where the integral is to be interpreted in the Lebesgue-Stieltjes sense.  It would be interesting to have a similar characterization for the Volterra type reflected equations.
\end{remark}

We make the following assumptions on $(f,\xi, L)$.

\subsection{Assumptions on $(f,\xi, L)$}

\begin{itemize}
\item[(A1)] The terminal condition $\xi(t)$, parametrized by $t$ is a $\B([0,T]\otimes \F_T$-measurable map  $\xi:\,\Omega\times[0,T]\longrightarrow \R$ which satisfies  
$$
\sup_{0\le t\le T}\E[|\xi(t)|^2]<\infty;
$$
\item[(A2)] The driver $f$ is a map from $\Omega\times[0,T]\times[0,T]\times \R\times \R$ onto $\R$ which satisfies, for any fixed $(t,y,z)\in[0,T]\times \R\times\R$, the process $f(t,\cdot,y,z)$ is progressively measurable. Moreover, 

\medskip
\begin{itemize}
\item[(A2-a)] $\quad
\underset{0\le t\le T}{\sup}\E\left[\left(\int_t^T |f(t,s,0,0)|ds\right)^2\right]<\infty;
$
\item[(A2-b)] There exists a positive constant $c_f$ such that $\Pf\as$, for all $(t,s)\in[0,T]^2$ and $y,y^{\prime},z,z^{\prime}\in\R$,
$$
|f(t,s,y,z)-f(t,s,y^{\prime},z^{\prime})|\le c_f\left(|y-y^{\prime}
|+|z-z^{\prime}|\right).
$$
\item[(A2-c)] For some $\alpha\in (0,1/2]$ and $c_1>0$, for all $(y,z)\in\R\times\R$ and all $0\le t , t^{\prime}\le s\le T$, 
$$
|f(t^{\prime},s,y,z)-f(t,s,y,z)|\le c_1|t^{\prime}-t|^{\alpha},
$$
and for some $\beta> 1/{\alpha}$ and $c_2>0$, 
$$
\E[|\xi(t)-\xi(t^{\prime})|^{\beta}]\le c_2|t^{\prime}-t|^{\alpha\beta}
$$
and
$$
\E\left[\left(\int_0^T|f(0,s,0,0)|^2ds\right)^{\beta/2}\right]<\infty.
$$
\end{itemize}
\item[(A3)] The obstacle $(L(u),\,0\le u\le T)$  is a real-valued and  $\Ff$-adapted continuous process satisfying
\begin{equation}\label{L}
 L(T)\le \xi(t),\,\,\, t\in [0,T]\quad\text{and}\quad  \E[\sup_{0\le u\le T}(L^+(u))^2]<\infty.
 \end{equation}
 \end{itemize}

\begin{remark} $ $
\begin{enumerate}
\item Assumption (A2-c) is similar to $(\widetilde{H1})$ in \cite{WZ} and implies that 
\begin{equation}\label{f-sup}
\E\left[\left(\underset{0\le t\le T}{\sup}\int_t^T |f(t,s,0,0)|^2ds\right)^{\beta/2}\right]<\infty
\end{equation}
which is stronger than (A2-a) since $\beta >\frac{1}{\alpha}>2$. 
\item As we will see it below, the assumptions (A1), (A2-a) and (A3) yield existence of a unique process $(Y,Z,K)$ which satisfies \eqref{rbsde-1} along with $(Y,Z)\in\hH^2\times\Lf^2$, Property (b) and $K$ satisfying (c1) except the continuity of $(t,u)\mapsto K(t,u)$.  Assumption (A2-c) yields the continuity of $Y$ and the bicontinuity of $K(\cdot,\cdot)$ which in turn insure the Skorohod flatness condition (c3). 
\end{enumerate}
\end{remark}
\section{Existence and Uniqueness of solutions to reflected BSVIEs}\label{EU}
In this section we derive existence, uniqueness and a comparison result for continuous reflected  BSVIEs.  We have

\begin{theorem}\label{main-1}
Suppose the assumptions (A1), (A2) and (A3) are satisfied. Then the reflected BSVIE
\eqref{rbsde-1} associated with $(f,\xi,L)$ admits a unique solution $(Y,Z,K)$ which satisfies (a), (b) and (c). Moreover, we have $\Pf\as$ the representation, for every $t\in[0,T]$, 
\begin{equation}\label{Y-rep}
Y(t)=\underset{\tau\ge t}{\text{ess sup}}\,\E\left[\int_t^{\tau}f(t,s, Y(s),Z(t,s))ds+L(\tau)\mathbb{1}_{\{\tau<T\}}+\xi(t)\mathbb{1}_{\{\tau=T\}}\Big|\,\F_t\right],
\end{equation}
where the essential supremum is taken over $\Ff$-stopping times $\tau$ taking values in $[0,T]$.
\end{theorem}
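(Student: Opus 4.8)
The plan is to reduce the reflected BSVIE to a family of standard reflected BSDEs indexed by the parameter $t$, solve each of them by the classical El Karoui \emph{et al.} theory, and then run a fixed point argument in $t$ to close the loop. Concretely, for a fixed generic adapted process $y \in \hH^2$ (playing the role of the eventual $Y(\cdot)$ inside the driver), and for each fixed $t \in [0,T]$, consider the standard reflected BSDE on $[t,T]$ with terminal value $\xi(t)$, driver $(s,z) \mapsto f(t,s,y(s),z)$, and lower obstacle given by the process $L(s)$ on $[t,T)$ together with the terminal constraint at $T$ coming from $L(T) \le \xi(t)$. Under (A1), (A2-a), (A2-b) and (A3) this is a bona fide reflected BSDE with Lipschitz driver, square-integrable terminal data and an obstacle whose positive part is in $\cS^2$, so El Karoui \emph{et al.} gives a unique triple $(Y^t(\cdot), Z(t,\cdot), K(t,\cdot))$ with $Y^t$ continuous, $Z(t,\cdot) \in \hH^2$, $K(t,\cdot)$ continuous increasing with $K(t,0)=0$, the obstacle constraint $Y^t(s) \ge L(s)$, and the Skorohod condition $\int_t^T (Y^t(s)-L(s))\,dK(t,s)=0$. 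One then \emph{defines} $Y(t) := Y^t(t)$; the representation \eqref{Y-rep} is exactly the Snell-envelope formula for this reflected BSDE evaluated at its initial time, so it comes for free once the fixed point is established and we have shown $Y(t)=Y^t(t)$.

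The fixed point step is where the real work lies. Define the map $\Phi: \hH^2 \to \hH^2$ by $\Phi(y)(t) = Y^{t,y}(t)$, where $Y^{t,y}$ solves the $t$-parametrized reflected BSDE with driver using $y$. I would show $\Phi$ is a contraction on $(\hH^2, \|\cdot\|_\beta)$ for a suitably weighted norm $\|u\|_\beta^2 = \E\int_0^T e^{\beta s}|u(s)|^2\,ds$ with $\beta$ large. Given $y_1, y_2 \in \hH^2$, apply the standard a priori estimate / comparison-type $L^2$ estimate for reflected BSDEs to the difference of the two solutions of the $t$-th equation: because the obstacle and terminal condition are the \emph{same} for both, the only discrepancy enters through the driver, and the Lipschitz bound (A2-b) gives, for each $t$,
\[
\E\!\left[|Y^{t,y_1}(t) - Y^{t,y_2}(t)|^2\right] \le C\,\E\!\left[\int_t^T |f(t,s,y_1(s),Z_1(t,s)) - f(t,s,y_2(s),Z_1(t,s))|\,ds\,\Big(\cdots\Big)\right] \le C\,\E\!\left[\int_t^T |y_1(s)-y_2(s)|^2\,ds\right].
\]
Integrating over $t\in[0,T]$ and using the exponential weight to absorb the constant yields $\|\Phi(y_1)-\Phi(y_2)\|_\beta \le \tfrac12 \|y_1 - y_2\|_\beta$ for $\beta$ large enough, so $\Phi$ has a unique fixed point $Y$. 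With $Y$ in hand, set $Z(t,s)$ and $K(t,s)$ to be the second and third components of the solution of the $t$-th reflected BSDE with driver frozen at this $Y$; membership $Z \in \Lf^2$ and $K(\cdot,T) \in \hH^2$ follows by integrating the per-$t$ estimates in $t$, and properties (b) and (c1)–(c2) hold for each $t$ by construction.

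The remaining, more delicate point is the \emph{joint} regularity in $(t,u)$: part (c1) demands $(t,u)\mapsto K(t,u)$ be continuous (not merely $u\mapsto K(t,u)$ for each $t$), and (c2) is phrased as a pathwise flatness statement simultaneously in $t$ and on intervals $[\alpha,\beta]$, plus (a) requires $t\mapsto Y(t)$ continuous. This is exactly where assumption (A2-c) enters, as the second remark after (A3) flags. The strategy is to estimate, for $t' < t$, the difference of the two reflected BSDEs (now the terminal conditions $\xi(t),\xi(t')$ and the drivers $f(t,\cdot,\cdot,\cdot), f(t',\cdot,\cdot,\cdot)$ differ, but the obstacle on the common interval $[t,T]$ is the same), obtaining an $L^\beta$-type bound of order $|t-t'|^{\alpha\beta}$ on $\sup_{s}|Y^t(s)-Y^{t'}(s)|$ and on the uniform distance between $K(t,\cdot)$ and $K(t',\cdot)$, using the Hölder hypotheses on $f$ in $t$ and on $\xi$ together with $\beta > 1/\alpha$. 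A Kolmogorov–Čentsov argument in the parameter $t$, combined with the already-known continuity in $u$ for each fixed $t$, then upgrades to a bicontinuous modification of $K$ and a continuous modification of $t\mapsto Y(t)$; the pathwise flatness condition (c2) for the continuous version is inherited from the per-$t$ Skorohod condition by a monotone-class / continuity argument. I expect this Kolmogorov-in-the-parameter step, and in particular propagating it cleanly to the increasing process $K$, to be the main obstacle; everything else is the standard reflected-BSDE toolbox plus a weighted-norm contraction. Uniqueness of $(Y,Z,K)$ follows from uniqueness of the fixed point together with uniqueness in the per-$t$ reflected BSDE.
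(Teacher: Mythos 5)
Your proposal is correct in substance and shares the paper's overall architecture --- a family of standard reflected BSDEs parametrized by $t$, solved via the Snell envelope and the El Karoui \emph{et al.} theory, with $Y(t):=\widetilde{Y}(t,t)$, followed by a fixed point and a Kolmogorov-continuity argument in the parameter $t$ to obtain joint regularity and the flatness condition (c2) --- but your fixed-point scheme is genuinely different. The paper freezes \emph{both} arguments of the driver: it maps $(U,V)\in\hH^2\times\Lf^2$ to the solution of the reflected BSVIE with driver $f(t,s,U(s),V(t,s))$, proves contraction only on a short terminal window $[T-\delta,T]$ with $\delta^2+\delta<1/8c_f$, and then pastes solutions backward interval by interval --- a step that is genuinely fiddly for Volterra equations, since the terminal datum for the next window must be read off the accompanying field $\widetilde{Y}^{0}(t,T-\delta)$ for $t$ outside $[T-\delta,T]$. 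You instead freeze only the $y$-argument, leave the $z$-dependence inside each per-$t$ reflected BSDE, and run a single global contraction on $\hH^2$ with the exponential weight $e^{\beta t}$; this buys a one-shot argument with no pasting. The price is that your displayed contraction inequality is incomplete as written: since $Z_1(t,\cdot)\neq Z_2(t,\cdot)$, the Lipschitz bound on the driver difference produces a term $c_f\int_t^T|Z_1(t,s)-Z_2(t,s)|\,ds$ which you must first absorb via the a priori stability estimate
\begin{equation*}
\E\left[\int_t^T|Z_1(t,s)-Z_2(t,s)|^2\,ds\right]\le C\,\E\left[\int_t^T|y_1(s)-y_2(s)|^2\,ds\right],
\end{equation*}
uniformly in $t$ (El Karoui \emph{et al.}, Proposition 3.6), before the right-hand side collapses to $C\,\E\bigl[\int_t^T|y_1-y_2|^2ds\bigr]$; with that made explicit, integrating in $t$ against $e^{\beta t}$ and taking $\beta$ large does give the contraction. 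The joint-regularity step you defer is exactly the content of the paper's Proposition \ref{const}: H\"older bounds of order $|t-t'|^{\alpha\beta}$ on $\sup_u|\widetilde{Y}(t,u)-\widetilde{Y}(t',u)|$, on $\int_t^{\cdot}f(t,s)\,ds$ and on the stochastic integral, hence on $K$ by difference, followed by Kolmogorov's continuity theorem; your sketch matches this, and your passage from the per-$t$ Skorohod condition to (c2) via bicontinuity is at the same level of detail as the paper's own argument.
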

\begin{remark}
We note that the representation \eqref{Y-rep} does not imply that $Y$ is a supermartingale, as it can easily be checked.
\end{remark}

\bigskip
Inspired by the approach suggested in \cite{lin}, \cite{WZ} and \cite{Yong2} to solve the ordinary BSVIE by identifying an accompanying true martingale, we derive a unique solution to the reflected BSVIE \eqref{rbsde-1} associated with $(f,\xi,L)$ by applying the notion of Snell envelope along with a contraction argument to an accompanying supermartingale $\widetilde{Y}(t,\cdot)$, parametrized by $t$, defined below from which we obtain $Y$ by setting $Y(t):=\widetilde{Y}(t,t)$. To this end, we first consider the case where the driver $f$ does not depend on $(Y,Z)$. Then, we consider the general case where $f$ depends on $(Y,Z)$.

\subsection{Driver independent of Y and Z}
Consider the following reflected BSVIE
\begin{equation}\label{rbsde-const}
Y(t)=\xi(t)+\int_{t}^{T}f(t,s)ds+\int_t^TK(t,ds)-\int_{t}^{T} Z(t,s)dW(s),
\end{equation}
where the driver $f$ does not depend on $(Y,Z)$. We have

\begin{proposition}\label{const} Under Assumptions (A1) and (A2), there exists a unique solution $(Y,Z,K)$ to the reflected BSVIE \eqref{rbsde-const} associated with $(f,\xi,L)$ satisfying (a), (b) and (c) and admits the representation \eqref{Y-rep}.
\end{proposition}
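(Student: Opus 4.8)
The plan is to reduce the problem, for each frozen value of the parameter $t$, to a classical one--obstacle reflected BSDE whose driver does not depend on the unknowns (equivalently, to a Snell envelope problem), and then to reassemble and regularise the resulting family in $t$. Fix $t\in[0,T]$ and work on $[t,T]$. I would introduce the reward process
\[
G(t,r):=\int_t^r f(t,s)\,ds+L(r)\mathbb{1}_{\{r<T\}}+\xi(t)\mathbb{1}_{\{r=T\}},\qquad r\in[t,T],
\]
and its Snell envelope
\[
M(t,r):=\underset{\tau\ge r}{\text{ess sup}}\;\E\!\left[\int_t^{\tau}f(t,s)\,ds+L(\tau)\mathbb{1}_{\{\tau<T\}}+\xi(t)\mathbb{1}_{\{\tau=T\}}\,\Big|\,\F_r\right],
\]
the essential supremum being over $\Ff$-stopping times valued in $[r,T]$. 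Assumptions (A1), (A2-a) and (A3) --- in particular $L(T)\le\xi(t)$ and $\E[\sup_u(L^+(u))^2]<\infty$ --- ensure that $G(t,\cdot)$ is continuous and that $M(t,\cdot)$ is a continuous square--integrable supermartingale of class (D), with bounds uniform in $t$. Applying the Doob--Meyer decomposition together with the reflected--BSDE theory of El Karoui {\it et al.}~\cite{elkaroui}, I would obtain $\widetilde{Z}(t,\cdot)\in\hH^2$ and a continuous nondecreasing process $\widetilde{K}(t,\cdot)$ with $\widetilde{K}(t,t)=0$ such that, writing $\widetilde{Y}(t,r):=M(t,r)-\int_t^r f(t,s)\,ds$,
\[
\widetilde{Y}(t,r)=\xi(t)+\int_r^T f(t,s)\,ds+\widetilde{K}(t,T)-\widetilde{K}(t,r)-\int_r^T\widetilde{Z}(t,s)\,dW(s),\qquad r\in[t,T],
\]
together with $\widetilde{Y}(t,r)\ge L(r)$ on $[t,T)$ and the pathwise flatness $\int_t^T(\widetilde{Y}(t,r)-L(r))\,d\widetilde{K}(t,r)=0$.

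I would then set $Y(t):=\widetilde{Y}(t,t)$, $Z(t,s):=\widetilde{Z}(t,s)\mathbb{1}_{\{s\ge t\}}$ and $K(t,u):=\widetilde{K}(t,u\vee t)$, so that $K(t,0)=0$ and $\int_t^T K(t,ds)=\widetilde{K}(t,T)$. Evaluating the displayed equation at $r=t$ yields \eqref{rbsde-const}; the obstacle inequality at $r=t$ yields property (b) (with the case $t=T$ covered by $L(T)\le\xi(T)$); and, since $M(t,\cdot)$ is by construction an essential supremum over stopping times, $Y(t)=M(t,t)$ is exactly the representation \eqref{Y-rep}. The uniform-in-$t$ $L^2$ bounds on $M(t,\cdot)$, $\widetilde{Z}(t,\cdot)$ and $\widetilde{K}(t,T)$ then give $Z\in\Lf^2$ and $K(\cdot,T)\in\hH^2$, so at this stage $K$ satisfies everything in (c1) except possibly the joint continuity $(t,u)\mapsto K(t,u)$, and $t\mapsto Y(t)$ need not yet be continuous.

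It is the continuity in $t$ --- hence the remaining parts of (a), (c1) and the whole of (c2) --- that uses (A2-c). Using the pathwise H\"older bound $|f(t',s)-f(t,s)|\le c_1|t'-t|^{\alpha}$ and the $L^{\beta}$ H\"older bound on $\xi$, together with the standard $L^{\beta}$--stability estimate for reflected BSDEs (the obstacle $L$ being common to all $t$; cf.\ the approach of \cite{WZ}), I would derive
\[
\E\Big[\sup_{r\in[0,T]}|\widetilde{Y}(t,r)-\widetilde{Y}(t',r)|^{\beta}\Big]+\E\Big[\sup_{u\in[0,T]}|\widetilde{K}(t,u)-\widetilde{K}(t',u)|^{\beta}\Big]\le C|t-t'|^{\alpha\beta}.
\]
Since $\alpha\beta>1$, Kolmogorov's continuity criterion (applied to $t\mapsto\widetilde{Y}(t,\cdot)$ and $t\mapsto\widetilde{K}(t,\cdot)$ as $C([0,T])$-valued processes) supplies modifications for which $(t,u)\mapsto K(t,u)$ is bicontinuous and $t\mapsto Y(t)=\widetilde{Y}(t,t)$ is continuous; this gives (a) and the full (c1). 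For the Skorohod condition (c2) I would combine the per-$t$ pathwise flatness --- equivalently, the fact that $\widetilde{K}(t,\cdot)$ increases only on $\{r:\widetilde{Y}(t,r)=L(r)\}$ --- with this bicontinuity: on the event $\{Y(u)>L(u)\ \text{for all}\ u\in[\alpha,\beta]\}$ the continuous function $(t,r)\mapsto\widetilde{Y}(t,r)-L(r)$ is strictly positive on the diagonal of $[\alpha,\beta]^{2}$, and I would use this to conclude that $\widetilde{K}(u,\cdot)$, hence $K(u,\cdot)$, does not charge $[\alpha,\beta]$ for any $u\in[\alpha,\beta]$. Uniqueness I would get by the classical route: for any solution $(Y,Z,K)$ the $t$--section $r\mapsto\xi(t)+\int_r^T f(t,s)\,ds+K(t,T)-K(t,r)-\int_r^T Z(t,s)\,dW(s)$ is a supermartingale of the required form dominating $G(t,\cdot)$ which, by the flatness, cannot strictly exceed its Snell envelope, hence equals $M(t,\cdot)-\int_t^r f(t,s)\,ds$; this determines $Y(t)=M(t,t)$, then $Z(t,\cdot)$ by martingale representation and $K(t,\cdot)$ by the equation. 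I expect the main obstacle to be twofold: first, producing the H\"older estimate above with precisely the exponent $\alpha\beta$ required by Kolmogorov's criterion; and, more fundamentally, converting the per-$t$ pathwise flatness into the Volterra Skorohod condition (c2) --- that is, passing from strict positivity of $\widetilde{Y}(u,u)-L(u)$ on $[\alpha,\beta]$ to the off-diagonal information about $\{\widetilde{Y}(u,\cdot)=L\}$ that is needed --- which is the genuinely new feature of this setting.
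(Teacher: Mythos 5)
Your construction is, step for step, the one the paper uses: the reward process $\Gamma(t,\cdot)$ and its Snell envelope for each frozen $t$, the Doob--Meyer decomposition together with the reflected-BSDE theory of El Karoui \emph{et al.}, the diagonal evaluation $Y(t)=\widetilde{Y}(t,t)$, a H\"older-in-$t$ estimate of order $\alpha\beta>1$ fed into Kolmogorov's continuity criterion, and a flatness argument for uniqueness. The only organisational difference is how the bicontinuity of $K$ is obtained: you invoke an $L^{\beta}$-stability estimate for the increasing process directly, whereas the paper first proves bicontinuity of $\widetilde{Y}$ (read off from the ess-sup representation together with Doob's inequality), of $(t,u)\mapsto\int_t^u f(t,s)\,ds$, and of $(t,u)\mapsto\int_0^u Z(t,s)\,dW(s)$ (via an It\^o/BDG estimate on $Z(t,\cdot)-Z(t',\cdot)$), and then reads off the bicontinuity of $K$ from the identity $K(t,u)=\widetilde{Y}(t,u)-\widetilde{Y}(t,0)-\int_0^u f(t,s)\,ds+\int_0^u Z(t,s)\,dW(s)$. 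These amount to the same estimates, and your uniqueness route (minimality of the Snell envelope) is a standard variant of the paper's It\^o-formula argument.

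The one place where your argument does not close --- and you flag it yourself --- is the passage from the per-$t$ flatness (f) to the Volterra condition (c2). Write the interval in (c2) as $[a,b]$ to avoid a clash with the exponents. Your sketch is: on the event $\{Y(u)>L(u)\ \text{for all}\ u\in[a,b]\}$ the continuous map $(t,r)\mapsto\widetilde{Y}(t,r)-L(r)$ is strictly positive on the diagonal of $[a,b]^2$, hence $K(u_0,\cdot)$ does not charge $[a,b]$ for any $u_0\in[a,b]$. But strict positivity on the (compact) diagonal only yields a $\delta>0$ with $\widetilde{Y}(t,r)>L(r)$ for $|t-r|<\delta$; applying (f) with $t=u_0$ then kills the increments of $K(u_0,\cdot)$ only on $(u_0-\delta,u_0+\delta)\cap[a,b]$, not on all of $[a,b]$. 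To conclude $K(u_0,a)=K(u_0,b)$ you need $\widetilde{Y}(u_0,r)>L(r)$ for \emph{every} $r\in[a,b]$, which is genuinely off-diagonal information that the diagonal hypothesis does not supply. This is precisely the ``new feature'' you identify, and it must be supplied: the paper's own resolution is to pass to the limit $t\to u$ in the statement of (f) using the joint continuity of $\widetilde{Y}$ and $K$, i.e.\ to transfer the flatness statement from the sections $\{t\}\times[a,b]$ to the diagonal. Whatever form your argument takes, it has to produce control of $\widetilde{Y}(u_0,\cdot)-L$, or directly of the measure $K(u_0,dr)$, away from the diagonal; the compactness/positivity sketch as written does not do this. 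The remainder of the proposal (existence, the representation \eqref{Y-rep}, the a priori estimates, uniqueness) matches the paper.
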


\begin{proof}
The proof is based on existence and uniqueness of the process $(\widetilde{Y},Z,K)$ which satisfies the following reflected BSDE, parametrized by $t$, associated with $(f,\xi,L)$: for each fixed $t\in[0,T]$,
\begin{equation}\label{bsde-aux}
\widetilde{Y}(t,u)=\xi(t)+\int_{u}^{T}f(t,s)ds+\int_u^TK(t,ds)-\int_{u}^{T} Z(t,s)dW(s),\,\, u\in[0,T],
\end{equation}
where
\begin{itemize}
\item[(d)]  $\widetilde{Y}(t,\cdot)\in\cS^2,\,\,K(t,T)\in L^2(\Pf),\,\, Z(t,\cdot)\in\hH^2$;
\item[(e)] $\widetilde{Y}(t,u)\ge L(u)\quad \Pf\as,\quad 0\le u\le T$;
\item[(f)]  $K(t,\cdot)$ is continuous  and increasing, $K(t,0)=0$ and satisfies the Skorohod's flatness condition:
for each $0\le \alpha<\beta \le T$,
$$\begin{array}{lll}
K(t,\alpha)=K(t,\beta) \quad\text{whenever}\quad\widetilde{Y}(t,u)>L(u) \,\, \text{for each}\,\,\, u\in[\alpha, \beta] \quad \Pf\as,  
\end{array}
$$
which is equivalent to the property
$$
\int_{t}^{T}(\widetilde{Y}(t,u)-L(u))\,K(t,du)=0, \quad \Pf\as 
$$
\end{itemize}
A solution $(Y,Z,K)$ to \eqref{rbsde-const} is obtained from  $(\widetilde{Y},Z,K)$ by setting $Y(t):=\widetilde{Y}(t,t)$.

\medskip

$\bullet$ {\bf Existence of a solution}.  For a fixed $t\in[0,T]$, set
$$
\Gamma(t,u):=\int_t^uf(t,s)ds+L(u)\mathbb{1}_{\{u<T\}}+\xi(t)\mathbb{1}_{\{u=T\}},\quad u\in[0,T].
$$
In view of (A2-a) and the continuity of the obstacle process $L$, the map $u\mapsto \Gamma(t,u)$ is continuous. Moreover, by (A1) and (A3), it holds that, for each $t\in[0,T]$,
$$\begin{array}{lll}
\underset{0\le t\le T}{\sup}\E\left[\underset{0\le u\le T}{\sup}|\Gamma(t,u)|^2\right]\le 8\underset{0\le t\le T}{\sup}\E\left[|\xi(t)|^2+\left(\int_t^T|f(t,u)|du\right)^2 \right. \\ \left. \qquad\qquad\qquad \qquad\qquad\qquad+\underset{0\le s\le T}{\sup}(L^+(s))^2\right]<\infty.
\end{array}
$$
Consider the process $(\widetilde{Y}(t,u),\,u\in[0,T])$ defined by
\begin{equation}\label{Y-tilde-snell}
\widetilde{Y}(t,u):=\underset{\tau\ge u}{\text{ess sup}}\,\E\left[\int_u^{\tau}f(t,s)ds+L(\tau)\mathbb{1}_{\{\tau<T\}}+\xi(t)\mathbb{1}_{\{\tau=T\}}\Big|\,\F_u\right],
\end{equation}
where the essential supremum is taken over $\Ff$-stopping times $\tau$ taking values in $[0,T]$. The process $X_t(u):=\widetilde{Y}(t,u)+\int_t^u f(t,s)ds,\,\,0\le u\le T$ satisfies
$$
X_t(u)=\underset{\tau\ge u}{\text{ess sup}}\,\E\left[\int_t^{\tau}f(t,s)ds+L(\tau)\mathbb{1}_{\{\tau<T\}}+\xi(t)\mathbb{1}_{\{\tau=T\}}\Big|\,\F_u\right],\quad u\in[0,T],
$$
i.e. it is the Snell envelope of the processes $(\Gamma(t,u))_{0\le u\le T}$ which is the smallest continuous supermartingale, parametrized by $t$, which  dominates the continuous process $\Gamma(t,\cdot)$. 
Furthermore, by Doob's inequality it holds that 
\begin{equation}\label{Y-est}
\underset{0\le t\le T}{\sup}\,\E\left[\underset{0\le u\le T}{\sup}|\widetilde{Y}(t,u)|^2\right]\le C\underset{0\le t\le T}{\sup}\E\left[\underset{0\le u\le T}{\sup}|\Gamma(t,u)|^2\right]<\infty. 
\end{equation}
In particular, the processes $\Gamma(t,\cdot)$ and $\widetilde{Y}(t,\cdot)$ are uniformly integrable and the processes $\Gamma(\cdot,u)$ and $\widetilde{Y}(\cdot,u))$ are square-integrable.  Furthermore, the supermartingale $(\widetilde{Y}(t,u)+\int_t^u f(t,s)ds)_{0\le u\le T}$ is square-integrable. 

\medskip\noindent
By the well known techniques related the Snell envelope which use the Doob-Meyer decomposition along with the martingale representation theorem see e.g. \cite{elkaroui} or \cite{zhang}, there exists a unique adapted increasing continuous process $K(t,\cdot)$ such that $K(t,0)=0$ and $K(t,T)\in L^2(\Pf)$ and an $\Ff$-adapted process $Z(t,\cdot)\in\hH^2$ such that
\begin{equation*}
\widetilde{Y}(t,u)=\xi(t)+\int_{u}^{T}f(t,s)ds+\int_u^TK(t,ds)-\int_{u}^{T} Z(t,s)dW(s),\,\, u\in[0,T],
\end{equation*}
for which the properties (d), (e) and (f) are satisfied.  Moreover, the process  \\ $\int_0^u Z(t,s)dW(s),\,\, u\in[0,T],$ is a uniformly integrable martingale and 
\begin{equation}\label{Y-tilde-est-1}\begin{array}{lll}
\E\left[\underset{0\le u\le T}{\sup}|\widetilde{Y}(t,u)|^2+\int_t^T|Z(t,s)|^2ds+K^2(t,T)\right]\le C\E\left[|\xi(t)|^2 \right.\\ \qquad\qquad\qquad \qquad\qquad\qquad\qquad  \left. +\int_t^T|f(t,s)|^2ds +\underset{0\le u\le T}{\sup}\,(L^+(u))^2\right].
\end{array}
\end{equation}
Hence, since $Y(t)=\widetilde{Y}(t,t)$, it satisfies $\Pf\as$ the representation \eqref{Y-rep} and the following estimate for $(Y,Z,K)$ holds:
\begin{equation}\label{Y-Z-K-est}
\begin{array}{lll}
\E\left[\int_0^T|Y(t)|^2dt+\int_0^T\int_t^T|Z(t,s)|^2dsdt+\int_0^TK^2(t,T)dt\right] \\ 
\qquad\qquad \le C\E\left[\int_0^T|\xi(t)|^2dt +\int_0^T\int_t^T|f(t,s)|^2ds +T\underset{0\le u\le T}{\sup}\,(L^+(u))^2\right],
\end{array}
\end{equation}
which is finite by the assumptions (A1), (A2-a) and (A3). Thus, $(Y, Z, K(\cdot,T))\in \hH^2\times\Lf^2\times\hH^2$.

\medskip Next, we will show that the maps $(t,u)\mapsto \widetilde{Y}(t,u),\, \int_t^u f(t,s)ds, \,\,\int_{0}^{u} Z(t,s)dW(s)$ and $K(t,u)$ are continuous. 

\noindent First we note that, given $t, t^{\prime}\in [0,T]$,  denoting by $
\Delta\xi:=\xi(t)-\xi(t^{\prime}),\,\, \Delta\widetilde{Y}(u):=\widetilde{Y}(t,u)-\widetilde{Y}(t^{\prime},u)$,  $\Delta Z(u):=Z(t,u)-Z(t^{\prime},u),\,\, \Delta f(u):=f(t,u)-f(t^{\prime},u)$ and $\Delta K(ds):=K(t,ds)-K(t^{\prime},ds)$ and applying It{\^o}'s formula to $|\widetilde{Y}(t,u)-\widetilde{Y}(t^{\prime},u)|^2$ and the Skorohod flatness condition (f), we have, for any $0\le u\le T$, 

\begin{equation}\label{est-yz-0}\begin{array}{lll}
|\Delta\widetilde{Y}(u))|^2+\int_u^T |\Delta Z(s)|^2ds=|\Delta \xi|^2+2\int_u^T(\Delta\widetilde{Y}(s)\Delta f(s)ds \\ \qquad\qquad\qquad \qquad +2\int_u^T\Delta\widetilde{Y}(s)d\Delta K(ds)-2\int_u^T \Delta\widetilde{Y}(s)\Delta Z(s)dW(s)\\ \qquad\qquad\qquad \le |\Delta \xi|^2+2\int_u^T\Delta \widetilde{Y}(s)\Delta f(s)ds -2\int_u^T \Delta\widetilde{Y}(s)\Delta Z(s)dW(s).
\end{array}
\end{equation}
Taking expectation, we obtain
\begin{equation}\label{est-yz}\begin{array}{lll}
\E\left[|\widetilde{Y}(t,u)-\widetilde{Y}(t^{\prime},u)|^2+\int_u^T |Z(t,s)-Z(t^{\prime},s)|^2ds\right]\le \E\left[
 |\xi(t^{\prime})-\xi(t)|^2\right] \\ \qquad\qquad\qquad\qquad+ 2\E\left[\int_u^T (\widetilde{Y}(t,u)-\widetilde{Y}(t^{\prime},u))(f(t,s)-f(t^{\prime},s))ds \right].
 \end{array}
 \end{equation} 
 Moreover, by Burkholder-Davis-Gundy's inequality, (A2-c) and Young's inequality it follows that, for any $\beta>1$, there exists a positive constant $C_{\beta}$ depending only on $\beta, c_1$ and $T$ such that 
 \begin{equation}\label{z-ineq}\begin{array}{lll}
 \E\left[\left(\int_0^T |Z(t,s)-Z(t^{\prime},s)|^2ds\right)^{\beta/2}\right]\le  C_{\beta}\E\left[|\xi(t)-\xi(t^{\prime})|^{\beta}\right] \\  \qquad\qquad\qquad\qquad\qquad+\E\left[\underset{0\le u\le T}{\sup}|\widetilde{Y}(t,u)-\widetilde{Y}(t^{\prime},u)|^{\beta} \right]\\ \qquad\qquad\qquad\qquad\qquad+|t-t^{\prime}|^{\frac{\alpha\beta}{2}}\left(\E\left[\underset{0\le u\le T}{\sup}|\widetilde{Y}(t,u)-\widetilde{Y}(t^{\prime},u)|^{\beta}\right]\right)^{1/2}.
 \end{array}
 \end{equation}

\medskip
$\bullet\,$ \underline{Continuity of the map $(t,u)\mapsto \widetilde{Y}(t,u)$}. 
Using (A2-c) we have, for $0\le t, t^{\prime}, u\le T$, \begin{equation*}\begin{array}{lll}
|\widetilde{Y}(t,u)-\widetilde{Y}(t^{\prime},u)|=\Big|\underset{\tau\ge u}{\text{ess sup}}\,\E\left[\int_u^{\tau}f(t,s)ds+L(\tau)\mathbb{1}_{\{\tau<T\}}+\xi(t)\mathbb{1}_{\{\tau=T\}}\Big|\,\F_u\right]-\\ \qquad\qquad \qquad \underset{\tau\ge u}{\text{ess sup}}\,\E\left[\int_u^{\tau}f(t^{\prime},s)ds+L(\tau)\mathbb{1}_{\{\tau<T\}}+\xi(t)\mathbb{1}_{\{\tau=T\}}\Big|\,\F_u\right]\Big|\\ \qquad\qquad\qquad \qquad \qquad  \le \underset{\tau\ge u}{\text{ess sup}}\,\E\left[\int_u^{\tau}|f(t,s)-f(t^{\prime},s)|ds+|\xi(t)-\xi(t^{\prime})|\Big|\,\F_u\right]\\ \qquad\qquad\qquad \qquad \qquad  \le \E\left[\int_u^T|f(t,s)-f(t^{\prime},s)|ds+|\xi(t)-\xi(t^{\prime})|\Big|\,\F_u\right]
\\ \qquad\qquad\qquad \qquad \qquad  \le T  c_1|t^{\prime}-t|^{\alpha}+
\E\left[|\xi(t)-\xi(t^{\prime})|\Big|\,\F_u\right].
\end{array}
\end{equation*}
By Doob's maximal inequality and (A2-c), since $\beta>1/{\alpha}\ge 2$,
$$
\begin{array}{lll}
\E\left[\underset{0\le u\le T}{\sup}\left(\E\left[|\xi(t)-\xi(t^{\prime})|\Big|\,\F_u\right]\right)^{\beta}\right]\le \left(\frac{\beta}{\beta-1}\right)^{\beta} \E[|\xi(t)-\xi(t^{\prime})|^{\beta}]\\ \qquad\qquad\qquad\qquad\qquad\qquad\qquad\qquad\le c_2\left(\frac{\beta}{\beta-1}\right)^{\beta}|t-t^{\prime}|^{\alpha\beta}.
\end{array}
$$
Hence, 
\begin{equation}\label{Y-tilde-est}
\E\left[\underset{0\le u\le T}{\sup}|\widetilde{Y}(t,u)-\widetilde{Y}(t^{\prime},u)|^{\beta}\right]\le C|t^{\prime}-t|^{\alpha\beta},
\end{equation}
where $C:=2^{\beta}\left(T  c_1+c_2\left(\frac{\beta}{\beta-1}\right)^{\beta}\right)$.

\medskip\noindent
In view of Kolmogorov's continuity theorem (see \cite{RY}, Theorem I.2.1.), $(t,u)\mapsto \widetilde{Y}(t,u)$ is continuous (admits a bicontinuous modification). In particular, $t\mapsto Y(t):= \widetilde{Y}(t,t)$ is continuous.

\medskip
$\bullet\,$ \underline{Continuity of the map $(t,u)\mapsto g(t,u):=\int_t^uf(t,s) ds$}. For any $0\le t<t^{\prime}\le T$, we have
\begin{equation*}\begin{array}{lll}
\E\left[ \underset{0\le u\le T}{\sup}|g(t^{\prime},u)-g(t,u)|^{\beta}\right]\le C\E\left[\big| \int_t^{t^{\prime}}f(t,s)ds\big|^{\beta}+\left(\int_0^T|f(t^{\prime},s)-f(t,s)|ds\right)^{\beta}\right].
\end{array}
\end{equation*}
By H{\"o}lder's inequality and (A2-c), we have
\begin{equation*}
\begin{array}{lll}
\E\left[\big| \int_t^{t^{\prime}}f(t,s)ds\big|^{\beta}\right]\le |t^{\prime}-t|^{\beta/2}\E\left[\left( \int_0^T|f(t,s)|^2ds\right)^{\beta/2}\right]\\ \qquad\qquad\qquad\qquad\quad \le |t^{\prime}-t|^{\beta/2} \underset{0\le t\le T}{\sup}\E\left[\left(\int_0^T|f(t,s)|^2ds\right)^{\beta/2}\right]\le C|t^{\prime}-t|^{\beta/2},
\end{array}
\end{equation*}
where the constant $C$ is due \eqref{f-sup}.
Moreover, 
$$
\E\left[ \left(\int_0^T|f(t^{\prime},s)-f(t,s)|ds\right)^{\beta}\right]\le (c_1T)^{\beta}|t^{\prime}-t|^{\alpha\beta}.
$$
Therefore, since $\alpha\in (0,1/2]$, we have 
\begin{equation}\label{g}
\E\left[ \underset{0\le u\le T}{\sup}|g(t^{\prime},u)-g(t,u)|^{\beta}\right]\le C(1+|t^{\prime}-t|^{\beta(\frac{1}{2}-\alpha)})|t^{\prime}-t|^{\alpha\beta} \le C|t^{\prime}-t|^{\alpha\beta}.
\end{equation}
Since $\alpha\beta>1$, by Kolmogorov's continuity theorem $t\mapsto g(t,u):=\int_t^uf(t,s) ds$ is continuous (admits a bicontinuous modification).

\medskip
$\bullet\,$ \underline{Continuity of the map $(t,u)\mapsto \int_0^uZ(t,s)dW(s)$}.
By Burkholder-Davis-Gundy's inequality and \eqref{z-ineq}, we have
$$
\begin{array}{lll}
\E\left[\underset{0\le u\le T}{\sup}|\int_0^u \left(Z(t,s)-Z(t^{\prime},s)\right)dW(s)|^{\beta}\right]\le (\frac{\beta}{\beta-1})^{\beta}
\E\left[\left(\int_0^T |Z(t,s)-Z(t^{\prime},s)|^2ds\right)^{\beta/2}\right] \\ \qquad\qquad  
\le  (\frac{\beta}{\beta-1})^{\beta}C_{\beta}\E\left[|\xi(t)-\xi(t^{\prime})|^{\beta}+ \underset{0\le u\le T}{\sup}|\widetilde{Y}(t,u)-\widetilde{Y}(t^{\prime},u)|^{\beta} \right] \\  \qquad +(\frac{\beta}{\beta-1})^{\beta}C_{\beta} |t-t^{\prime}|^{\frac{\alpha\beta}{2}}\left(\E\left[\underset{0\le u\le T}{\sup}|\widetilde{Y}(t,u)-\widetilde{Y}(t^{\prime},u)|^{\beta}\right]\right)^{1/2}.

\end{array}
$$ 
Therefore, by (A2-c) and \eqref{Y-tilde-est}, it holds that
$$
\E\left[\underset{0\le u\le T}{\sup}|\int_0^u \left(Z(t,s)-Z(t^{\prime},s)\right)dW(s)|^{\beta}\right]\le\widehat{C}_{\beta}|t-t^{\prime}|^{\alpha\beta}.
$$
Hence, in view of Kolmogorov's continuity theorem, $(t,u)\mapsto \int_0^u Z(t,s)dW(s)$ is continuous (admits a bicontinuous modification). 

\medskip
$\bullet\,$ \underline{Continuity of the map $(t,u)\mapsto K(t,u)$}. This follows from the fact that
$$\begin{array}{lll}
K(t,u)=\widetilde{Y}(t,u)-\widetilde{Y}(t,0)-\int_0^u f(t,s)ds+\int_0^uZ(t,s)dW(s),\,\, (t,u)\in[0,T]^2,
\end{array}
$$
and the bicontinuity of each of the terms on the r.h.s.

\medskip
With $Y(t):=\widetilde{Y}(t,t)$, we obtain a solution$(Y,Z,K)$ to \eqref{rbsde-const} which satisfies (a), (b) and the property (c1). It remains to check the Skorohod flatness condition (c2). Indeed, in view of the property (f), for each fixed $t\in [0,T]$, it holds that, for each $0\le \alpha<\beta \le T$,
$$\begin{array}{lll}
K(t,\alpha)=K(t,\beta) \quad\text{whenever}\,\, \widetilde{Y}(t,u)>L(u) \,\, \text{for each}\,\,\, u\in[\alpha, \beta] \quad \Pf\as,  
\end{array}
$$
Thanks to the bicontinuity of $\widetilde{Y}(\cdot,\cdot)$ and $K(\cdot,\cdot)$, by sending $t$ to $u$ it holds that
 for each $0\le \alpha<\beta \le T$,
$$
\begin{array}{lll}
K(u,\alpha)=K(u,\beta) \quad\text{whenever}\,\, Y(u)=\widetilde{Y}(u,u)>L(u) \,\, \text{for each}\,\,\, u\in[\alpha, \beta] \quad \Pf\as  
\end{array}
$$

\medskip
$\bullet$ {\bf Uniqueness of the solution}. 
Let $(Y^{\prime},Z^{\prime},K^{\prime})$ be another solution to \eqref{rbsde-1} associated with $(f, \xi, L)$ satisfying (a), (b) and (c). Define $\widehat{Y}:=Y-Y^{\prime}, \widehat{Z}:=Z-Z^{\prime}$ and $\widehat{K}:=K-K^{\prime}$, and correspondingly $\widehat{\widetilde{Y}}:=\widetilde{Y}-\widetilde{Y^{\prime}}$.   Applying It\^o's formula to $|\widehat{\widetilde{Y}}|^2$ and taking expectation we obtain, for each fixed $t\in [0,T]$, 
$$
\E\left[|\widehat{\widetilde{Y}}(t,u)|^2+\int_u^T|\widehat{Z}(t,s)|^2ds\right]= 2\E\left[\int_u^T\widehat{\widetilde{Y}}(t,s)\widehat{K}(t,ds)\right],\quad u\in[0,T].
$$
But, by the flatness condition (f), we have $\int_u^T\widehat{\widetilde{Y}}(t,s)\widehat{K}(t,ds)\le 0 \,\,\Pf\as$ which implies that
$\E\left[|\widehat{\widetilde{Y}}(t,u)|^2\right]=0$ and $E\left[\int_u^T|\widehat{Z}(t,s)|^2ds\right]=0$ for all $u\in[0,T]$. Therefore, 
$\widetilde{Y}(t,\cdot)=\widetilde{Y^{\prime}}(t,\cdot)$ and $Z(t,\cdot)=Z^{\prime}(t,\cdot)$ and then $K(t,\cdot)=K^{\prime}(t,\cdot)\,\,\Pf\as$  Hence $Y=Y^{\prime} \,\,\Pf\as$, by the continuity of  the map $(t,u)\mapsto \widetilde{Y}(t,u)$. 
\end{proof}

\subsection{The general case. Proof of Theorem \ref{main-1}} Consider the Banach space $\Ec:=\hH^2\times \Lf^2$  of the $\R\times\R$-valued processes $(Y,Z)$ with values in $\R\times\R$ such that $Y\in\hH^2, Z\in \Lf^2$ endowed $\|(Y,Z)\|^2_{\Ec}=\|Y\|^2_{\hH^2}+\|Z\|^2_{\Lf^2}$. Define the map $\Phi$ from $\Ec$ onto itself as follows: For any $(U,V)\in\Ec, \,\, (Y,Z)=\Phi(U,V)$ is the unique element of $\Ec$ such that if we define
\begin{equation}\label{K-fx}\begin{array}{lll}
\int_0^t K(t,ds):=Y(t)-Y(0)-\int_0^t f(t,s,U(s),V(t,s))ds\\ \qquad\qquad\qquad\qquad\qquad\qquad\qquad\qquad\qquad +\int_0^tZ(t,s)dW(s),\,\, t\in[0,T],
\end{array}
\end{equation}
then the triple $(Y,Z,K)$ solves the reflected BSVIE \eqref{rbsde-1} associated with the entrees $(f(t,s,U(s),V(t,s)),\xi,L)$.  

\medskip\noindent
Denote $\Ec([u,v])$ the Banach space $\Ec$ where the time horizon is restricted to $[u,v],\,\, u,v\in[0,T]$.

\begin{proposition}\label{delta} Assume (A1), (A2) and (A3).  Then there exists $\delta>0$ depending only on the Lipschitz constant of $f$ such that $\Phi$ is a contraction mapping on the space $\Ec([T-\delta,T])$.
\end{proposition}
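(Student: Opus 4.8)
The plan is to reduce everything to the accompanying parametrized reflected BSDEs supplied by Proposition \ref{const}. Fix two pairs $(U_1,V_1),(U_2,V_2)\in\Ec([T-\delta,T])$, put $(Y_i,Z_i):=\Phi(U_i,V_i)$, and let $\widetilde{Y}_i(t,\cdot)$, with increasing process $K_i(t,\cdot)$, be the solution of the reflected BSDE \eqref{bsde-aux} associated with the driver $\bigl(s\mapsto f(t,s,U_i(s),V_i(t,s))\bigr)$ and obstacle $L$, so that $Y_i(t)=\widetilde{Y}_i(t,t)$. Write $\Delta\widetilde{Y}(t,u):=\widetilde{Y}_1(t,u)-\widetilde{Y}_2(t,u)$ and, analogously, $\Delta Z(t,s)$, $\Delta K(t,ds)$ and $\Delta f(s):=f(t,s,U_1(s),V_1(t,s))-f(t,s,U_2(s),V_2(t,s))$. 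Since both problems carry the \emph{same} terminal value $\xi(t)$ and the \emph{same} obstacle $L$, we have $\Delta\widetilde{Y}(t,T)=0$.

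For each fixed $t\in[T-\delta,T]$ apply It\^o's formula to $|\Delta\widetilde{Y}(t,u)|^{2}$ on $[u,T]$, exactly as in \eqref{est-yz-0}. The Skorohod flatness condition (f), applied to $K_1$ and $K_2$ (which are attached to the same obstacle), gives $\int_u^T\Delta\widetilde{Y}(t,s)\,\Delta K(t,ds)\le 0$ $\Pf\as$, because on the support of $dK_i$ one has $\widetilde{Y}_i(t,\cdot)=L\le\widetilde{Y}_j(t,\cdot)$. Taking expectations (the stochastic integral is a genuine martingale, by the square integrability recorded in Proposition \ref{const}, exactly as in the uniqueness step there) yields, for every $u\in[t,T]$,
\[
\E\Big[|\Delta\widetilde{Y}(t,u)|^{2}+\int_u^T|\Delta Z(t,s)|^{2}\,ds\Big]\le 2\,\E\Big[\int_u^T|\Delta\widetilde{Y}(t,s)|\,|\Delta f(s)|\,ds\Big]=:2R(t).
\]
Integrating the bound $\E[|\Delta\widetilde{Y}(t,u)|^{2}]\le 2R(t)$ over $u\in[t,T]$ gives $\E[\int_t^T|\Delta\widetilde{Y}(t,s)|^{2}\,ds]\le 2\delta R(t)$; combining this with Cauchy--Schwarz in $R(t)$ produces $R(t)\le(2\delta R(t))^{1/2}\Theta(t)^{1/2}$, i.e. $R(t)\le 2\delta\,\Theta(t)$, where $\Theta(t):=\E[\int_t^T|\Delta f(s)|^{2}\,ds]$. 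Taking $u=t$ above and recalling $\Delta\widetilde{Y}(t,t)=\Delta Y(t)$, we arrive at
\[
\E\big[|\Delta Y(t)|^{2}\big]+\E\Big[\int_t^T|\Delta Z(t,s)|^{2}\,ds\Big]\le 2R(t)\le 4\delta\,\Theta(t).
\]

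It remains to integrate in the outer variable $t$ and identify the norms. By the Lipschitz assumption (A2-b), $|\Delta f(s)|^{2}\le 2c_f^{2}\bigl(|\Delta U(s)|^{2}+|\Delta V(t,s)|^{2}\bigr)$, hence $\Theta(t)\le 2c_f^{2}\bigl(\E[\int_t^T|\Delta U(s)|^{2}ds]+\E[\int_t^T|\Delta V(t,s)|^{2}ds]\bigr)$. Integrating over $t\in[T-\delta,T]$ and using $\int_{T-\delta}^{T}\E[\int_t^T|\Delta U(s)|^{2}ds]\,dt\le\delta\|\Delta U\|_{\hH^{2}}^{2}$ together with $\int_{T-\delta}^{T}\E[\int_t^T|\Delta V(t,s)|^{2}ds]\,dt=\|\Delta V\|_{\Lf^{2}}^{2}$ gives
\[
\|\Delta Y\|_{\hH^{2}}^{2}+\|\Delta Z\|_{\Lf^{2}}^{2}\le 8c_f^{2}\delta\bigl(\delta\|\Delta U\|_{\hH^{2}}^{2}+\|\Delta V\|_{\Lf^{2}}^{2}\bigr)\le 8c_f^{2}\delta\,\|(\Delta U,\Delta V)\|_{\Ec}^{2}
\]
as soon as $\delta\le 1$. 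Choosing $\delta:=\min\{1,(16c_f^{2})^{-1}\}$, which depends only on the Lipschitz constant $c_f$, makes $\Phi$ a strict contraction on $\Ec([T-\delta,T])$.

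The reflection is not the difficult point: the Skorohod flatness condition disposes of the $\Delta K$ term with the correct sign, exactly as in the classical reflected BSDE theory of \cite{elkaroui}. The care is needed in the Volterra bookkeeping --- checking that, after the It\^o estimate which is carried out on the ``inner'' time axis $u$ for a frozen ``outer'' time $t$, the integration in $t$ regenerates precisely the $\hH^{2}$ and $\Lf^{2}$ norms, and that the short-horizon factor $\delta$ multiplies both contributions: the $\|\Delta U\|_{\hH^{2}}$ term picks up an additional $\delta$ from the outer integration, while the $\|\Delta V\|_{\Lf^{2}}$ term is already damped by the $\delta$ coming from the smallness of the oscillation of $\widetilde{Y}(t,\cdot)$ on the short interval $[t,T]$.
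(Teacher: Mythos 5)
Your proof is correct. The skeleton is the same as the paper's: pass to the accompanying $t$-parametrized reflected BSDE $\widetilde{Y}_i(t,\cdot)$ supplied by Proposition \ref{const}, observe that both copies share the same terminal value $\xi(t)$ and the same obstacle $L$ so that the Skorohod condition forces $\int_u^T\Delta\widetilde{Y}(t,s)\,\Delta K(t,ds)\le 0$, and then exploit the shortness of the horizon. Where you genuinely diverge is in how the cross term $\E\big[\int_u^T\Delta\widetilde{Y}\,\Delta f\,ds\big]$ is controlled. The paper invokes the Snell-envelope representation \eqref{Y-tilde-snell} to get the pointwise bound $|\overline{\widetilde{Y}}(t,u)|\le\E\big[\int_u^T|\bar{f}(t,s)|\,ds\,\big|\,\F_u\big]$, which it feeds directly into the $\hH^2$ estimate for $\overline{Y}$ (no It\^o needed there) and, via the tower property, into the It\^o estimate for $\overline{Z}$, producing $2\E\big[(\int_u^T|\bar{f}(t,r)|\,dr)^2\big]$. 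You instead stay entirely inside the energy estimate and close it on itself: integrating $\E[|\Delta\widetilde{Y}(t,u)|^2]\le 2R(t)$ over the short inner interval and applying Cauchy--Schwarz yields the self-bounding inequality $R(t)\le(2\delta R(t))^{1/2}\Theta(t)^{1/2}$, hence $R(t)\le 2\delta\Theta(t)$, which simultaneously bounds $\E[|\Delta Y(t)|^2]$ and $\E[\int_t^T|\Delta Z(t,s)|^2ds]$. This variant never uses the optimal-stopping representation of $\widetilde{Y}$ --- only the It\^o identity and the sign of the reflection term --- so it is somewhat more robust; the bookkeeping of the two factors of $\delta$ (one from the inner integration damping $\|\Delta V\|_{\Lf^2}$, one from the outer integration damping $\|\Delta U\|_{\hH^2}$) is handled correctly and matches the paper's final bound $Cc_f^2(\delta+\delta^2)\|(\Delta U,\Delta V)\|^2_{\Ec}$. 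Two cosmetic points: your $R(t)$ is written as a $u$-dependent expression before being frozen at $u=t$ (harmless, since the integrand is nonnegative, but worth stating); and, as in the paper's own uniqueness step, one should record why $\int_0^\cdot\Delta\widetilde{Y}\,\Delta Z\,dW$ is a true martingale, which you do by citing the $\cS^2\times\hH^2$ integrability from Proposition \ref{const}.
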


\begin{proof} Let $X:=(U,V)$ and $X^{\prime}:=(U^{\prime},V^{\prime})$ be two elements of $\Ec$ and define
$\overline{X}=(\overline{U},\overline{V})=X-X^{\prime},\,\, \overline{Y}=Y-Y^{\prime}, \,\,\overline{Z}=Z-Z^{\prime}, \,\,\overline{K}=K-K^{\prime}$ and  for $s,t \in[0,T]$, $\bar{f}(t,s):=f(t,s, U(s),V(t,s))-
f(t,s, U^{\prime}(s),V^{\prime}(t,s))$. We have
$$
\int_0^t \overline{K}(t,ds):=\overline{Y}(t)-\overline{Y}(0)-\int_0^t \bar{f}(t,s)ds+\int_0^t\overline{Z}(t,s)dW(s),\quad 0\le t \le T.
$$
From Proposition \ref{const}, we have $\Pf\as$
$$\begin{array}{lll}
Y(t)=\underset{\tau\ge t}{\text{ess sup}}\,\E\left[\int_t^{\tau}f(t,s, U(s),V(t,s))ds+L(\tau)\mathbb{1}_{\{\tau<T\}}+\xi(t)\mathbb{1}_{\{\tau=T\}}\Big|\,\F_t\right], \\ 
Y^{\prime}(t)=\underset{\tau\ge t}{\text{ess sup}}\,\E\left[\int_t^{\tau}f(t,s, U^{\prime}(s),V^{\prime}(t,s))ds+L(\tau)\mathbb{1}_{\{\tau<T\}}+\xi(t)\mathbb{1}_{\{\tau=T\}}\Big|\,\F_t\right].
\end{array}
$$
Thus,
$$
\left|\overline{Y}(t)\right|\le \underset{\tau\ge t}{\text{ess sup}}\,\E\left[\int_t^{\tau}|\bar{f}(t,s)|ds\Big|\,\F_t\right]\le \E\left[\int_t^T|\bar{f}(t,s)|ds\Big|\,\F_t\right].
$$
Noting that by the Cauchy-Schwarz inequality and  (A2-b) we have
\begin{equation}\label{bar-f-est}\begin{array}{lll}
\E\left[\int_{T-\delta}^T\left(\int_t^T|\bar{f}(t,s)|ds\right)^2dt\right] \le  \delta 
\E\left[\int_{T-\delta}^T\int_t^T|\bar{f}(t,s)|^2ds dt\right] \\  \qquad\qquad \le 4c^2_f\delta 
\E\left[\int_{T-\delta}^T\int_t^T|\overline{U}(s)|^2ds dt+\int_{T-\delta}^T\int_t^T|\overline{V}(t,s)|^2ds dt\right] \\ \qquad\qquad 
\le 4c^2_f(\delta^2+\delta)\|(\overline{U},\overline{V})\|^2_{\Ec([T-\delta,T]}.
\end{array}
\end{equation}
Therefore, 
$$\begin{array}{lll}
\E\left[\int_{T-\delta}^T|\overline{Y}(t)|^2dt\right]\le \E\left[\int_{T-\delta}^T\left(\E\left[\int_t^T|\bar{f}(t,s)|ds\Big|\,\F_t\right]\right)^2dt\right] \\ \qquad\qquad \le  \E\left[\int_{T-\delta}^T\left(\int_t^T|\bar{f}(t,s)|ds\right)^2dt\right]   \le 4c^2_f(\delta^2+\delta)\|(\overline{U},\overline{V})\|^2_{\Ec([T-\delta,T]}.
\end{array}
$$
On the other hand, by It\^o's formula applied to $|\overline{\widetilde{Y}}(t,u)|^2$, taking expectation and using Young's inequality, we obtain
$$
\E\left[\int_u^T|\overline{Z}(t,s)|^2ds\right]\le 2 E\left[\int_u^T \overline{\widetilde{Y}}(t,s)\bar{f}(t,s)ds\right].
$$
In view of \eqref{Y-tilde-snell}, we have for any $u\in[0,T]$,
$$
|\overline{\widetilde{Y}}(t,u)|\le \underset{\tau\ge u}{\text{ess sup}}\,\E\left[\int_u^{\tau}|\bar{f}(t,s)|ds\Big|\,\F_u\right]\le \E\left[\int_u^T|\bar{f}(t,s)|ds\Big|\,\F_u\right].
$$
Therefore,
$$
\begin{array}{lll}
\E\left[\int_u^T|\overline{Z}(t,s)|^2ds\right]\le 2 E\left[\int_u^T \overline{\widetilde{Y}}(t,s)\bar{f}(t,s)ds\right] \\ \qquad\qquad \qquad\qquad
\le 2 E\left[\int_u^T \bar{f}(t,s)\E\left[\int_s^T|\bar{f}(t,r)|dr\Big|\,\F_s\right]ds\right]\\ \qquad\qquad \qquad\qquad
\le 2 E\left[\int_u^T |\bar{f}(t,s)|\E\left[\int_u^T|\bar{f}(t,r)|dr\Big|\,\F_s\right]ds\right]\\ \qquad\qquad \qquad\qquad 
= 2 E\left[\left(\int_u^T|\bar{f}(t,r)|dr\right)^2\right].
\end{array}
$$
In particular, in view of \eqref{bar-f-est}, 
$$\begin{array}{lll}
\E\left[\int_{T-\delta}^T\int_t^T|\overline{Z}(t,s)|^2dsdt\right]\le 2 E\left[\int_{T-\delta}^T\left(\int_t^T|\bar{f}(t,r)|dr\right)^2dt\right]\\ \qquad\qquad \qquad\qquad \le 8c^2_f(\delta^2+\delta)\|(\overline{U},\overline{V})\|^2_{\Ec([T-\delta,T])}.
\end{array}
$$
Summing up, we have
$$
\|(\overline{Y},\overline{Z})\|^2_{\Ec([T-\delta,T]}\le 8c_f(\delta^2+\delta)\|(\overline{U},\overline{V})\|^2_{\Ec([T-\delta,T]}.
$$
Now, choosing $\delta>0$ such that  $\delta^2+\delta<1/8c_f$,  yields that  $\Phi$ is a contraction mapping on $\Ec([T-\delta,T]$ and thus admits a unique fixed point which yields  the unique solution of the reflected BSVIE \eqref{rbsde-1} associated with $(f,\xi,L)$ over $[T-\delta,T]$.
\end{proof}

\noindent We are now ready to give a proof of Theorem \ref{main-1}.
\begin{proof} By repeatedly applying the fixed point argument of Proposition \ref{delta} over adjacent time intervals of fixed length  $\delta$, satisfying $\delta^2+\delta<1/8c_f$, and pasting the solutions that we describe below,  we finally obtain existence of a unique solution in $\hH^2\times \Lf^2\times \cK^2$  to the reflected BSVIE \eqref{rbsde-1} over the whole time interval $[0,T]$.  

Since we are dealing with Volterra-type integrals where the driver $ f$ depends on $t$, we paste two adjacent solutions as follows. Fix such a $\delta$ and let $(Y^0, Z^0, K^0)$  be the unique solution of \eqref{rbsde-1} on $\Ec([T-\d, T])$. Consider the accompanying reflected BSDE $(\widetilde{Y}^0(t,\cdot), Z^0(t,\cdot),K^0(t,\cdot))$, with lower obstacle $L$, parametrized by $t\in [0,T]$, defined by 
\begin{equation*}\begin{array}{lll}
\widetilde{Y}^0(t,u)=\xi(t)+\int_{u}^{T}f(t,s, Y^0(s),\widetilde{Z}^0(t,s))ds \\ \qquad\qquad\qquad\qquad\qquad \qquad\qquad +\int_u^T\widetilde{K}^0(t,ds)-\int_{u}^{T} \widetilde{Z}^0(t,s)dW(s),\,\,\, u\in[T-\delta,T],
\end{array}
\end{equation*}
where $\widetilde{K}^0(t,T-\delta)=0$. In particular, $\widetilde{Y}^0(t,T-\delta)\in L^2(\F_{T-\delta})$. Moreover,  by Proposition \ref{delta},  we have $\Pf$-a.s.
$$
(\widetilde{Y}^0(t,t), \widetilde{Z}^0(t,s), \widetilde{K}^0(t,s))=(Y^0(t), Z^0(t,s), K^0(t,s)),\quad (t,s)\in [T-\delta,T]^2,\, t\le s.
$$

\noindent On the time interval $[T-2\delta,T-\delta]$, we consider the BSVIE 
\begin{equation}\label{2d}\begin{array}{ll}
Y^1(t)=\widetilde{Y}^0(t,T-\delta)+\int_{t}^{T-\delta}f(t,s,Y^1(s),Z^1(t,s))ds\\ \qquad\qquad\qquad +\int_t^{T-\delta}K^1(t,ds)-\int_{t}^{T-\delta} Z^1(t,s)dW(s),\,\, t\in[T-2\delta,T-\delta].
\end{array}
\end{equation}
Since the length of the time interval $[T-2\delta,T-\delta]$ is $\delta$, we may apply Proposition \ref{delta} to obtain a unique solution $(Y^1,Z^1,K^1)\in \hH^2([T-2\delta,T-\delta])\times \Lf^2([T-2\delta,T-\delta])\times\cK^2([T-2\delta,T-\delta])$ to the reflected BSVIE \eqref{2d}. 

\noindent Now, with
$$
Y(t):=\begin{cases} Y^0(t),  & t\in[T-\delta,T], \\ Y^1(t), & t\in[T-2\delta,T-\delta],
\end{cases}
$$
and
$$ 
(Z(t,s),K(t,s)):=\begin{cases} (Z^0(t,s),K^0(t,s)),  & (t,s)\in[T-\delta,T]^2,\,\, t\le s, \\ 
(\widetilde{Z}^0(t,s),\widetilde{K}^0(t,s)),  & t\in[T-2\delta,T-\delta]\times [T-\delta,T],\\
(Z^1(t,s),K^1(t,s)), & (t,s)\in[T-2\delta,T-\delta]^2, \,\, t\le s,
\end{cases}
$$
 the process $(Y,Z,K)$ solves the reflected BSVIE \eqref{rbsde-1} over the time interval $[T-2\delta,T]$.

\noindent By repeating the same reasoning on each time interval $[T-(m+1)\delta,T-m\delta],\,\, m=1,2,\ldots,n$ (where $n$ is arbitrary) with a similar dynamics but terminal condition $\widetilde{Y}^{m-1}(t,T-m\delta)$ at time $T-m\delta$, for $t\in [T-(m+1)\delta,T-m\delta]$, we construct recursively, for $m=1,2,\ldots,n$,  a solution $(Y^m,Z^m,K^m)\in \hH^2\times \Lf^2\times \cK^2$ on each time interval $[T-(m+1)\delta,T-m\delta]$. Pasting these processes, we obtain a unique solution $(Y,Z,K)$ of the reflected BSVIE \eqref{rbsde-1} on the full time interval $[0,T]$.\\ 
\end{proof}

A closer look at the way the estimate \eqref{Y-tilde-est-1} is derived in e.g. \cite{elkaroui} or \cite{zhang}, we obtain the following proposition as a direct consequence of the estimate leading to it.

\begin{proposition}\label{Y-S}
If instead of (A1) and (A2), we assume that 
$$
\E[\sup_{0\le t\le T}|\xi(t)|^2]<\infty \quad\text{and}\quad \E\left[\sup_{0\le t\le T}\left(\int_t^T |f(t,s,0,0)|ds\right)^2\right]<\infty,
$$
then 
$$
\|Y\|^2_{\cS^2}:=\E\left[\sup_{0\le t\le T}|Y(t)|^2\right]<\infty.
$$
\end{proposition}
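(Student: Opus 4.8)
The plan is to read the desired bound off the Snell-envelope representation \eqref{Y-rep} and Doob's $L^2$-maximal inequality — the very mechanism behind the estimate \eqref{Y-tilde-est-1} — and then to check that the strengthened integrability makes the resulting bound finite, term by term. I would fix the solution $(Y,Z,K)$ furnished by Theorem \ref{main-1}; since $\tau\equiv T$ is always admissible in \eqref{Y-rep}, bounding its three ingredients crudely gives, for every $t\in[0,T]$,
\[
|Y(t)|\le \E\left[\int_t^T|f(t,s,Y(s),Z(t,s))|\,ds+\sup_{0\le v\le T}L^+(v)+|\xi(t)|\,\Big|\,\F_t\right]=:\E\left[B_t\mid\F_t\right].
\]
The bicontinuity of the solution processes (Proposition \ref{const}) makes $t\mapsto B_t$ a.s.\ continuous, so $\bar B:=\sup_{0\le t\le T}B_t$ is $\F_T$-measurable; from $B_t\le\bar B$ we get $|Y(t)|\le\E[\bar B\mid\F_t]$ for all $t$, hence $\sup_{0\le t\le T}|Y(t)|\le\sup_{0\le u\le T}\E[\bar B\mid\F_u]$, and Doob's $L^2$-inequality applied to the nonnegative martingale $(\E[\bar B\mid\F_u])_{0\le u\le T}$ yields $\E[\sup_{0\le t\le T}|Y(t)|^2]\le 4\,\E[\bar B^2]$. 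Everything thus reduces to showing $\E[\bar B^2]<\infty$.

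Next I would expand $\bar B$ with the Lipschitz bound (A2-b), $|f(t,s,Y(s),Z(t,s))|\le|f(t,s,0,0)|+c_f|Y(s)|+c_f|Z(t,s)|$, obtaining
\[
\bar B\le\sup_{0\le t\le T}\int_t^T|f(t,s,0,0)|\,ds+c_f\int_0^T|Y(s)|\,ds+c_f\sup_{0\le t\le T}\int_t^T|Z(t,s)|\,ds+\sup_{0\le v\le T}L^+(v)+\sup_{0\le t\le T}|\xi(t)|,
\]
and then checking the five terms: the $L^+$-term is square-integrable by (A3); the $\xi$- and the $f(\cdot,\cdot,0,0)$-terms by the two strengthened hypotheses; and $\E[(\int_0^T|Y(s)|\,ds)^2]\le T\,\E[\int_0^T|Y(s)|^2\,ds]=T\|Y\|_{\hH^2}^2<\infty$ since $Y\in\hH^2$ by Theorem \ref{main-1}. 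Only $\E[(\sup_t\int_t^T|Z(t,s)|\,ds)^2]\le T\,\E[\sup_t\int_t^T|Z(t,s)|^2\,ds]$ remains.

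The main obstacle is precisely this last term: one must promote $Z\in\Lf^2$, an $L^2$-bound \emph{integrated} over $t$, to the \emph{uniform} bound $\E[\sup_{0\le t\le T}\int_t^T|Z(t,s)|^2\,ds]<\infty$, and here assumption (A2-c) is genuinely used. I would obtain it by re-running the H{\"o}lder-in-$t$ estimate \eqref{z-ineq} together with \eqref{Y-tilde-est} (or their analogues for the general, $(Y,Z)$-dependent driver, derived by the same It{\^o}--Gronwall computation, the $z$-dependence of $f$ absorbed via Young's inequality), which give, for some $C>0$ with $\alpha\beta>1$,
\[
\E\left[\left(\int_0^T|Z(t,s)-Z(t',s)|^2\,ds\right)^{\beta/2}\right]\le C\,|t-t'|^{\alpha\beta},\qquad 0\le t,t'\le T,
\]
and then applying Kolmogorov's continuity theorem to $t\mapsto N(t):=\big(\int_t^T|Z(t,s)|^2\,ds\big)^{1/2}$ — just as it was applied to $(t,u)\mapsto\int_0^uZ(t,s)\,dW(s)$ in the proof of Proposition \ref{const} — to get a continuous version of $N$ with $\E[\sup_{0\le t\le T}N(t)^2]<\infty$, using in addition that $N(t_0)\in L^2$ at any $t_0$ where the $L^1([0,T])$-function $t\mapsto\E[\int_t^T|Z(t,s)|^2\,ds]$ is finite. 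Assembling the pieces gives $\E[\bar B^2]<\infty$, hence $\|Y\|_{\cS^2}^2=\E[\sup_{0\le t\le T}|Y(t)|^2]<\infty$; the only non-routine step is this passage from the integrated to the uniform-in-$t$ control of $Z$.
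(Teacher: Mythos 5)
Your argument is correct and runs on the same engine as the paper's (very terse) proof: the estimate behind \eqref{Y-tilde-est-1} is the pathwise Snell-envelope bound $|\widetilde{Y}(t,u)|\le\E[\sup_{v}|\Gamma(t,v)|\,|\,\F_u]$ followed by Doob's $L^2$-inequality, and the whole point of the strengthened hypotheses is that the supremum over $t$ can be pulled inside the conditional expectation before Doob is applied --- which is exactly your step $|Y(t)|\le\E[\bar B\mid\F_t]$, $\E[\sup_t|Y(t)|^2]\le 4\E[\bar B^2]$. Where you go beyond the paper is in noticing that for the general driver $\bar B$ contains the term $c_f\sup_{t}\int_t^T|Z(t,s)|\,ds$, whose square-integrability is \emph{not} delivered by $Z\in\Lf^2$ and is nowhere addressed in the paper's one-sentence proof; your promotion of the integrated bound to a uniform-in-$t$ bound via the H{\"o}lder estimate \eqref{z-ineq} together with \eqref{Y-tilde-est} and Kolmogorov's theorem (the same device the paper uses for $(t,u)\mapsto\int_0^uZ(t,s)\,dW(s)$) is the right repair, and it is a genuine addition rather than a detour. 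Two small refinements: apply the reverse triangle inequality to $N(t):=\bigl(\int_0^T|Z(t,s)|^2\,ds\bigr)^{1/2}$ with the \emph{fixed} lower limit $0$ rather than the moving limit $t$ (the latter produces an extra boundary term $\bigl(\int_{t\wedge t'}^{t\vee t'}|Z|^2\,ds\bigr)^{1/2}$ not controlled by \eqref{z-ineq}; since $Z(t,\cdot)$ from the auxiliary equation \eqref{bsde-aux} is defined on all of $[0,T]$ and $\int_t^T\le\int_0^T$, this costs nothing); and the extension of \eqref{z-ineq}--\eqref{Y-tilde-est} to the $(Y,Z)$-dependent driver that you invoke is the same stability estimate already implicitly needed for the bicontinuity of $\widetilde{Y}$ and hence the continuity of $Y$ in Theorem \ref{main-1}, so no new assumption is being smuggled in.
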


\subsection{A comparison result for reflected BSVIEs}
In this section we derive a comparison theorem, similar to that of \cite{elkaroui}, Theorem 4.1 for standard reflected BSDEs, which extends  \cite{wy}, Theorem  3.4., for non-reflected BSVIEs. We follow the method of the proof in \cite{wy} and first derive a comparison when the driver $f$ does not depend of $y$, extending \cite{wy}, Proposition 3.3,  since in this case the proof is based on the comparison principle for reflected BSDEs, and then proceed to the proof of the general case using an approximation scheme. Some the imposed conditions on the coefficients can be relaxed at the expense of involved technical details  that we omit to make the content easy to follow. 

\begin{proposition}\label{comp}
Let $(\xi,f,L)$ and $(\xi^{\prime}, f^{\prime}, L^{\prime})$ two sets of entrees which satisfy the assumptions (A1), (A2) and (A3) and suppose further that 
\begin{itemize}
\item[(i)]  $\xi(t)\le \xi^{\prime}(t), \,\, \Pf\as, \,\,\, 0\le t\le T$,
\item[(ii)] $f(t,s,z)\le f^{\prime}(t,s,z), \quad   \text{for all}\,\,\, (t,z)\in[0,s]\times \R,\,\,\, \text{a.s., a.e.}\,\, s\in [0,T]$,
\item[(iii)] $L(s)\le L^{\prime}(s),\,\, 0\le s\le T,\,\, \Pf\as$ 
\end{itemize}
Let $(Y,Z,K)$ be a solution of the reflected BSVIE associated with $(\xi,f,L)$ and $(Y^{\prime},Z^{\prime},K^{\prime})$ be a solution of the reflected BSVIE associated with $(\xi^{\prime}, f^{\prime}, L^{\prime})$. Then
$$
Y(t)\le Y^{\prime}(t), \quad 0\le t\le T,\quad \Pf\as
$$
\end{proposition}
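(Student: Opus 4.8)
The plan is to exploit the representation \eqref{Y-rep} together with the fact that, for each fixed $t$, $Y(t)$ is the value at time $t$ of a Snell envelope associated with a reflected BSDE parametrized by $t$. First I would reduce, as in \cite{wy}, to the case where the driver does not depend on $y$. Fix a set of entrees and consider the Picard-type iteration used implicitly in the contraction argument of Propositions \ref{delta} and the proof of Theorem \ref{main-1}: set $(Y^0,Z^0)\equiv 0$ and, given $(Y^n,Z^n)$, let $(Y^{n+1},Z^{n+1},K^{n+1})$ be the unique solution (Proposition \ref{const}) of the reflected BSVIE with frozen driver $f(t,s,Y^n(s),Z^{n+1}(t,s))$ — more precisely one freezes the $y$-argument and keeps the Lipschitz $z$-dependence, which is admissible because the proof of Proposition \ref{const} only used (A2-a), and the contraction of Proposition \ref{delta} shows $(Y^n,Z^n)\to(Y,Z)$ in $\Ec([T-\delta,T])$ and hence, after pasting, in $\Ec([0,T])$. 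Do the same for the primed data, getting $(Y'^{,n},Z'^{,n},K'^{,n})$. The upshot is that it suffices to prove the comparison at each step $n$ of the iteration, since the inequality $Y^n(t)\le Y'^{,n}(t)$ passes to the limit.

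For the inductive step, suppose $Y^n(s)\le Y'^{,n}(s)$ $\Pf$-a.s.\ for all $s$. Then the frozen drivers satisfy, using (A2-b) in the form of monotonicity is \emph{not} available — here is where one must be slightly careful: the standard trick is to keep the $z$-dependence and linearize. Concretely, for each fixed $t$ the processes $u\mapsto\widetilde Y^{n+1}(t,u)$ and $u\mapsto\widetilde Y'^{,n+1}(t,u)$ solve standard reflected BSDEs on $[0,T]$ with obstacles $L\le L'$, terminal data $\xi(t)\le\xi'(t)$, and drivers
$$
g(t,s,z):=f\bigl(t,s,Y^n(s),z\bigr),\qquad g'(t,s,z):=f'\bigl(t,s,Y'^{,n}(s),z\bigr).
$$
By assumption (ii) and the inductive hypothesis together with... in fact the cleanest route is to compare $\widetilde Y^{n+1}(t,\cdot)$ to the solution of the reflected BSDE with driver $g'(t,s,z)$ but obstacle $L$ and terminal value $\xi(t)$, invoking the classical comparison theorem for reflected BSDEs (\cite{elkaroui}, Theorem 4.1), which applies because $g(t,s,z)=f(t,s,Y^n(s),z)\le f'(t,s,Y^n(s),z)\le$ — and here one needs $f'$ to be nondecreasing in $y$, which is \emph{not} assumed. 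So instead I would argue directly from the Snell-envelope representation \eqref{Y-tilde-snell}: for every stopping time $\tau\ge u$,
$$
\E\Bigl[\int_u^\tau g(t,s)\,ds+L(\tau)\mathbb{1}_{\{\tau<T\}}+\xi(t)\mathbb{1}_{\{\tau=T\}}\,\Big|\,\F_u\Bigr]\le \E\Bigl[\int_u^\tau g'(t,s)\,ds+L'(\tau)\mathbb{1}_{\{\tau<T\}}+\xi'(t)\mathbb{1}_{\{\tau=T\}}\,\Big|\,\F_u\Bigr]
$$
once we know $g(t,s)\le g'(t,s)$ a.e.\ $s$, $\Pf$-a.s. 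Taking ess sup over $\tau$ gives $\widetilde Y^{n+1}(t,u)\le \widetilde Y'^{,n+1}(t,u)$, in particular $Y^{n+1}(t)=\widetilde Y^{n+1}(t,t)\le\widetilde Y'^{,n+1}(t,t)=Y'^{,n+1}(t)$.

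This reduces everything to the pointwise driver inequality $g(t,s)\le g'(t,s)$, i.e.\ $f(t,s,Y^n(s),z)\le f'(t,s,Y'^{,n}(s),z)$ — and \textbf{this is the main obstacle}, because without monotonicity of $f$ or $f'$ in $y$ the inductive hypothesis $Y^n\le Y'^{,n}$ does not obviously give it. The resolution, following the scheme announced before Proposition \ref{comp} (``first derive a comparison when $f$ does not depend on $y$\ldots then proceed to the general case using an approximation scheme''), is: (1) prove the $y$-independent case exactly as above, purely from \eqref{Y-rep} and the monotonicity of the Snell envelope in its data; (2) in the general case, note $f(t,s,Y(s),z)$ is a \emph{given} adapted field once $Y$ is known, so $(Y,Z,K)$ solves a $y$-independent reflected BSVIE with driver $\widehat f(t,s,z):=f(t,s,Y(s),z)$; then use (A2-b) to write $\widehat f(t,s,z)=f(t,s,Y'(s),z)+\bigl(f(t,s,Y(s),z)-f(t,s,Y'(s),z)\bigr)\le f(t,s,Y'(s),z)+c_f|Y(s)-Y'(s)|\le f'(t,s,Y'(s),z)+c_f(Y(s)-Y'(s))^+$ using (ii); set $R(s):=(Y(s)-Y'(s))^+$ and apply the $y$-independent comparison between the reflected BSVIE solved by $Y$ with driver $f'(t,s,Y'(s),z)+c_fR(s)$ and that solved by $Y'$ with driver $f'(t,s,Y'(s),z)$, to get $Y(t)\le Y'(t)+c_f\,\E\bigl[\int_t^T R(s)\,ds\,\big|\,\F_t\bigr]$, hence $R(t)\le c_f\,\E[\int_t^T R(s)\,ds\mid\F_t]$; finally take expectations, set $r(t):=\E[R(t)]$, obtain $r(t)\le c_f\int_t^T r(s)\,ds$, and conclude $r\equiv0$ by Grönwall, i.e.\ $Y(t)\le Y'(t)$ $\Pf$-a.s.\ for a.e.\ $t$, and for all $t$ by continuity of $Y,Y'$.
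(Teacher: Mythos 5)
Your argument for the statement at hand has a genuine gap at its central step. Proposition \ref{comp} concerns drivers of the form $f(t,s,z)$: there is no $y$-argument, so the entire Picard iteration in $y$ and the worry about monotonicity of $f'$ in $y$ are beside the point here (they belong to Theorem \ref{compa-general}). What you are left with for this proposition is the ``direct Snell-envelope monotonicity'' argument, and that is where the problem lies: the representation \eqref{Y-rep} for $Y$ has integrand $f(t,s,Z(t,s))$ while the one for $Y'$ has integrand $f'(t,s,Z'(t,s))$, evaluated at the \emph{two different} processes $Z$ and $Z'$. Hypothesis (ii) orders $f$ and $f'$ at the \emph{same} $z$, so it does not give the pointwise inequality $f(t,s,Z(t,s))\le f'(t,s,Z'(t,s))$ that your term-by-term comparison of the two essential suprema requires. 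The representation \eqref{Y-tilde-snell}, from which monotonicity in the data is immediate, is only available when the driver is independent of $z$ as well.

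The fix is exactly the tool you considered and then discarded: for each fixed $t$, the accompanying processes $\widetilde Y(t,\cdot)$ and $\widetilde Y'(t,\cdot)$ solve standard reflected BSDEs in $u$ with drivers $(s,z)\mapsto f(t,s,z)$ and $(s,z)\mapsto f'(t,s,z)$, terminal values $\xi(t)\le\xi'(t)$ and obstacles $L\le L'$; the classical comparison theorem (\cite{elkaroui}, Theorem 4.1) applies under (i)--(iii) and the Lipschitz condition (A2-b) --- it is precisely this theorem that absorbs the mismatch between $Z(t,\cdot)$ and $Z'(t,\cdot)$ by linearizing in $z$ and changing measure --- and yields $\widetilde Y(t,u)\le\widetilde Y'(t,u)$ for all $u$. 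One then sets $u=t$, using the bicontinuity of $(t,u)\mapsto\widetilde Y(t,u)$ established in Proposition \ref{const}, to conclude $Y(t)\le Y'(t)$; this is the paper's proof. The reason you gave for abandoning this route, namely that $f'$ would need to be nondecreasing in $y$, is vacuous for the present proposition since the drivers have no $y$-argument; it matters only for the general case, which the paper handles by a monotone approximation under an added monotonicity hypothesis. Note also that your Gr\"onwall scheme for that general case suffers from the same $z$-mismatch when you bound the perturbed Snell envelope by $Y'(t)+c_f\,\E[\int_t^TR(s)\,ds\,|\,\F_t]$.
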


\begin{proof} For each fixed $t\in[0,T]$, let $\widetilde{Y}(t,\cdot)$ and $\widetilde{Y^{\prime}}(t,\cdot)$ be the standard BSDEs accompanying $Y$ and $Y^{\prime}$ respectively, constructed in a similar way to the one described in the proof of Proposition \ref{const}: For each fixed $t\in [0,T]$,
\begin{equation*}\begin{array}{lll}
\widetilde{Y}(t,u)=\xi(t)+\int_{u}^{T}f(t,s,Z(t,s))ds \\ \qquad\qquad\qquad\qquad\qquad \qquad +\int_u^TK(t,ds)-\int_{u}^{T} Z(t,s)dW(s),\,\,\, u\in[0,T].
\end{array}
\end{equation*}
A similar form for $\widetilde{Y^{\prime}}$ holds. We may apply the comparison theorem (\cite{elkaroui}, Theorem 4.1) for standard reflected BSDEs to obtain that, for each fixed $t\in[0,T]$,
$$
\widetilde{Y}(t,u)\le \widetilde{Y^{\prime}}(t,u), \quad 0\le u\le T\quad \text{a.s.}
$$ 
Since, the maps $(t,u)\mapsto \widetilde{Y}(t,u),\,\widetilde{Y^{\prime}}(t,u)$ are continuous, it follows that

$$
Y(t):=\widetilde{Y}(t,t)\le \widetilde{Y^{\prime}}(t,t)=:Y^{\prime}(t), \quad 0\le t\le T\quad \text{a.s.}
$$
\end{proof}

\begin{theorem}\label{compa-general}
Let $(\xi,f,L)$ and $(\xi^{\prime}, f^{\prime}, L^{\prime})$ two sets of entrees which satisfy the assumptions (A1), (A2) and (A3) and that either the map $y\mapsto f(t,s,y,z)$ or $y\mapsto f^{\prime}(t,s,y,z)$  is nondecreasing. Assume further that

\begin{itemize}
\item[(i)]  $\xi(t)\le \xi^{\prime}(t), \,\, \Pf\as, \,\,\, 0\le t\le T$,
\item[(ii)] $f(t,s,y,z)\le  f^{\prime}(t,s,y,z), \,\,\,   \text{for all}\,\,\, (t,z)\in[0,s]\times \R,\,\,\, \text{a.s., a.e.}\,\, s\in [0,T]$,
\item[(iii)] $L(s)\le L^{\prime}(s),\,\, 0\le s\le T,\,\, \Pf\as$ 
\end{itemize}
Let $(Y,Z,K)$ be a solution of the reflected BSVIE associated with $(\xi,f,L)$ and $(Y^{\prime},Z^{\prime},K^{\prime})$ be a solution of the reflected BSVIE associated with $(\xi^{\prime}, f^{\prime}, L^{\prime})$. Then
$$
Y(t)\le Y^{\prime}(t), \quad 0\le t\le T,\quad \Pf\as
$$
\end{theorem}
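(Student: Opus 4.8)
The plan is to adapt the Picard-iteration scheme of \cite{wy}, using Proposition \ref{comp} (comparison for drivers not depending on $y$) as the elementary building block and the contraction estimates underlying Proposition \ref{delta} to pass to the limit. Assume first that $y\mapsto f'(t,s,y,z)$ is nondecreasing; the case where $y\mapsto f(t,s,y,z)$ is nondecreasing is handled by the symmetric iteration, started from $Y'$ and decreasing towards $Y$. Set $Y^{(0)}:=Y$ and, given $Y^{(n)}\in\hH^2$, let $(Y^{(n+1)},Z^{(n+1)},K^{(n+1)})$ be the unique solution of the reflected BSVIE associated with $(\xi',f'_n,L')$, where $f'_n(t,s,z):=f'(t,s,Y^{(n)}(s),z)$. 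Existence and uniqueness follow from Theorem \ref{main-1}: $f'_n$ depends only on $(s,z)$, is Lipschitz in $z$ with constant $c_f$, and inherits Assumptions (A1)--(A3) from $(\xi',f',L')$ once $Y^{(n)}\in\hH^2$.

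The first step is the monotonicity $Y^{(n)}\le Y^{(n+1)}$, $\Pf$-a.s., for all $n$, proved by induction. For $n=0$, uniqueness shows that $Y^{(0)}=Y$ coincides with the solution of the reflected BSVIE having the $y$-frozen driver $z\mapsto f(t,s,Y(s),z)$, which by hypothesis (ii) is dominated by $f'_0(t,s,z)=f'(t,s,Y(s),z)$; since also $\xi\le\xi'$ and $L\le L'$, Proposition \ref{comp} gives $Y^{(0)}\le Y^{(1)}$. For the induction step, $Y^{(n-1)}\le Y^{(n)}$ and the monotonicity of $f'$ in $y$ yield $f'_{n-1}(t,s,z)\le f'_n(t,s,z)$ for every $z$; the two reflected BSVIEs share the terminal value $\xi'$ and obstacle $L'$, so Proposition \ref{comp} gives $Y^{(n)}\le Y^{(n+1)}$. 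Hence $(Y^{(n)})_{n\ge0}$ is nondecreasing with $Y^{(0)}=Y$.

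The second step is the convergence $Y^{(n)}\to Y'$ in $\hH^2$, where $(Y',Z',K')$ is the solution associated with $(\xi',f',L')$. Restricting to an interval $[T-\delta,T]$, set $\bar f^{(n)}(t,s):=f'(t,s,Y^{(n)}(s),Z^{(n+1)}(t,s))-f'(t,s,Y'(s),Z'(t,s))$, so that $|\bar f^{(n)}(t,s)|\le c_f\big(|Y^{(n)}(s)-Y'(s)|+|Z^{(n+1)}(t,s)-Z'(t,s)|\big)$. Exactly as in the proof of Proposition \ref{delta}, the representation \eqref{Y-rep} gives $|Y^{(n+1)}(t)-Y'(t)|\le\E[\int_t^T|\bar f^{(n)}(t,s)|\,ds\mid\F_t]$, and It\^o's formula applied to the difference of the accompanying supermartingales, together with the flatness condition, gives $\E[\int_t^T|Z^{(n+1)}(t,s)-Z'(t,s)|^2ds]\le 2\,\E[(\int_t^T|\bar f^{(n)}(t,s)|\,ds)^2]$. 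Estimating $\E[\int_{T-\delta}^T(\int_t^T|\bar f^{(n)}(t,s)|\,ds)^2dt]$ as in \eqref{bar-f-est} and absorbing the resulting $\|Z^{(n+1)}-Z'\|^2_{\Lf^2([T-\delta,T])}$ term on the left for $\delta$ small enough, one obtains $\|Y^{(n+1)}-Y'\|_{\hH^2([T-\delta,T])}\le\rho\,\|Y^{(n)}-Y'\|_{\hH^2([T-\delta,T])}$ with $\rho<1$, hence $Y^{(n)}\to Y'$ in $\hH^2([T-\delta,T])$. Pasting over adjacent intervals of length $\delta$ as in the proof of Theorem \ref{main-1} --- the comparability of the terminal data needed to restart the iteration on each interval being supplied, at the level of the accompanying standard reflected BSDEs, by the comparison theorem of \cite{elkaroui} --- propagates the convergence to $\hH^2([0,T])$.

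Finally, along a subsequence $Y^{(n)}\to Y'$ $\,d\Pf\otimes dt$-a.e.; since $Y=Y^{(0)}\le Y^{(n)}$ for every $n$, this forces $Y\le Y'$ $\,d\Pf\otimes dt$-a.e., and the continuity of $t\mapsto Y(t)$ and $t\mapsto Y'(t)$ from Theorem \ref{main-1} upgrades this to $Y(t)\le Y'(t)$ for every $t\in[0,T]$, $\Pf$-a.s. The main obstacle is the convergence step: because the $z$-argument of the driver of $Y^{(n+1)}$ is the solution's own $Z^{(n+1)}$, the natural a priori estimate produces $\|Z^{(n+1)}-Z'\|^2$ on both sides, which is what forces the contraction to be carried out only on a short interval (with $\delta$ depending solely on $c_f$) and then pasted --- and also why, as the authors note, the standing moment assumptions on the data must be monitored so that Theorem \ref{main-1} genuinely applies to each iterate $f'_n$.
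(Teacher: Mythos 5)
Your proof is correct and follows the same overall strategy as the paper: a monotone Picard iteration whose single step is the $y$-independent comparison of Proposition \ref{comp}, followed by a convergence argument that identifies the limit of the iterates with one of the two given solutions. The differences are mirror-image and technical rather than conceptual. The paper assumes $y\mapsto f(t,s,y,z)$ nondecreasing and iterates \emph{downward} from $Y_0=Y'$ with the smaller data $(\xi,f,L)$, so that the limit is $Y$ and $Y\le Y_0=Y'$ follows; you assume $y\mapsto f'(t,s,y,z)$ nondecreasing and iterate \emph{upward} from $Y^{(0)}=Y$ with the larger data, so that $Y\le Y^{(n)}\to Y'$ — either variant covers the theorem once the symmetric case is noted. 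For convergence, the paper shows $(Y_n,Z_n,K_n)$ is Cauchy on all of $[0,T]$ in an $e^{\theta t}$-weighted norm with $\theta$ large, and must then identify the limit as a solution of the reflected BSVIE (invoking Peng's monotone limit theorem for the Skorohod condition) before appealing to uniqueness; your route of estimating $\|Y^{(n+1)}-Y'\|$ directly against $\|Y^{(n)}-Y'\|$ exploits that $Y'$ is itself a fixed point of the frozen-$y$ map, which neatly bypasses the limit-identification step, but at the price of working on short intervals and pasting. That pasting is the one place you are too brief: on $[T-2\delta,T-\delta]$ the iterates carry $n$-dependent terminal data $\widetilde{Y}^{(n)}(t,T-\delta)$, so the recursion becomes $a_{n+1}\le\rho a_n+\epsilon_n$ with $\epsilon_n\to 0$ controlled by the convergence already established on $[T-\delta,T]$ — this works, but it is this perturbation estimate (not the El Karoui comparison theorem you cite there, which is only relevant to the monotonicity step) that needs to be written out; the paper's weighted-norm argument avoids the issue entirely.
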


\begin{proof}
The proof follows the same steps of the proof of Theorem 3.2 in \cite{wy}. We sketch it and leave some technical details related to Peng's monotone convergence theorem \cite{peng} which are by now standard in the literature related to reflected BSDEs.  

Assume the map $y\mapsto f(t,s,y,z)$ is nondecreasing. Set $Y_0(\cdot)=Y^{\prime}(\cdot)$ and for $n\ge 1$, consider, the following sequence of reflected BSVIEs with the same lower obstacle $L$
\begin{equation*}
Y_n(t)=\xi(t)+\int_{t}^{T}f(t,s,Y_{n-1}(s),Z_n(t,s))ds+\int_t^TK_n(t,ds)-\int_{t}^{T} Z_n(t,s)dW(s). 
\end{equation*}
In view of the assumption (ii) and the monotonicity of $f$ in $y$, we may apply Proposition \eqref{comp}, to obtain that,  for every $n\ge 1$,
$$
Y^{\prime}(t)=Y_0(t)\ge Y_1(t) \ge \cdots \ge Y_n(t)\ge Y_{n+1}(t),\quad t\in [0,T],\,\, \text{a.s.}
$$
If we show that $(Y_n,Z_n,K_n)_{n\ge 1}$ converges to $(Y,Z,K)$ w.r.t. the norm defined, for a given $\theta >0$,  by
$$
\|(Y,Z,K)\|^2_{\theta}:=\E\left[\int_0^Te^{\theta t}|Y(t)|^2dt+\int_0^Te^{\theta t}\int_t^T|Z(t,s)|^2ds dt+\int_0^Te^{\theta t}|K(t,T)|^2dt\right]
$$
 which is equivalent with the norm defined on the space $\hH^2\times\Lf^2\times\hH^2$, then the limit process $(Y^*,Z^*,K^*)$
satisfies the reflected BSVIE 
$$
Y^*(t)=\xi(t)+\int_{t}^{T}f(t,s,Y^*(s),Z^*(t,s))ds+\int_t^TK^*(t,ds)-\int_{t}^{T} Z^*(t,s)dW(s)
$$
along with the Skorohod flatness condition (obtained by taking a.s. converging subsequences) w.r.t. the same lower obstacle $L$. By uniqueness of the solution,  we have $Y^*(\cdot)=Y(\cdot)$ which in turn entails the comparison principle. 
 
To show convergence,  let $\widetilde{Y}_n(t,\cdot)$  the solution to the standard reflected BSDEs, parametrized by $t \in [0,T]$,
  accompanying $Y_n$ (i.e.  $\widetilde{Y}_n(t,t)=Y_n(t)$), constructed in a similar way to the one described in the proof of Proposition \ref{const}, defined, for each fixed $t\in [0,T]$, by
\begin{equation*}\begin{array}{lll}
\widetilde{Y}_n(t,u)=\xi(t)+\int_{u}^{T}f(t,s,Y_{n-1}(s), Z_n(t,s))ds \\ \qquad\qquad\qquad\qquad\qquad \qquad +\int_u^TK_n(t,ds)-\int_{u}^{T} Z_n(t,s)dW(s),\,\,\, u\in[0,T],
\end{array}
\end{equation*}
For $n, m\ge1$, set 
$\delta\widetilde{Y}(t,s):=\widetilde{Y}_n(t,s)-\widetilde{Y}_m(t,s)$, $\delta Z(t,u):=Z_n(t,u)-Z_m(t,u)$, $\delta f(t,s):=f(t,s,Y_{n-1}(s), Z_n(t,s))-f(t,s,Y_{m-1}(s), Z_m(t,s))$ and $\delta K(t,u)=K_n(t,u)-K_m(t,u)$. 

\noindent For any $\theta>0$, we may apply It{\^o}'s formula to $e^{\theta u}|\delta\widetilde{Y}(t,u)|^2$ along with the Skorohod flatness condition to obtain
\begin{eqnarray*}\begin{array}{lll}
\E\left[e^{\theta u}|\delta\widetilde{Y}(t,u)|^2+\int_u^T e^{\theta s}|\delta Z(t,s))|^2ds\right] \\ \qquad\qquad
\le 2\E\left[\int_u^T e^{\theta s} \delta\widetilde{Y}(t,s)\delta f(t,s)ds\right]-\theta \E\left[\int_u^Te^{\theta s}|\delta\widetilde{Y}(t,s)|^2ds\right]\\ \qquad\qquad
\le -\theta\E\left[\int_u^Te^{\theta s}|\delta\widetilde{Y}(t,s)|^2ds\right]+\E\left[\int_u^T e^{\theta s}\left(\theta |\delta\widetilde{Y}(t,s)|^2+\frac{1}{\theta}|\delta f(t,s)|^2\right)ds\right] \\ \qquad\qquad
\le \frac{2 c^2_f}{\theta}\E\left[\int_u^T e^{\theta s}\left(|Y_{n-1}(s)-Y_{m-1}(s)|^2+|\delta Z(t,s)|^2\right)ds\right].
\end{array}
\end{eqnarray*}
Therefore, with $Y_n(t)=\widetilde{Y}_n(t,t)$, we have
\begin{eqnarray*}\begin{array}{lll}
E\left[\int_0^Te^{\theta t}|Y_{n}(t)-Y_{m}(t)|^2dt\right]+(1-\frac{2c^2_f}{\theta})\E\left[\int_0^T\int_t^T e^{\theta s}|Z_n(t,s)-Z_m(t,s))|^2dsdt\right] \\ \qquad\qquad\qquad\qquad
\le \frac{2Tc_f^2}{\theta}\E\left[\int_0^T e^{\theta s}|Y_{n-1}(s)-Y_{m-1}(s)|^2ds\right].
\end{array}
\end{eqnarray*}
Moreover, in view of the estimate in Proposition 3.6 of \cite{elkaroui}, there exists a constant $C$ independent of $n$ and $m$ such that 

\begin{eqnarray*}\begin{array}{lll}
\E\left[|\delta K(t,T)|^2\right]\le C \E\left[\int_t^T |\delta f(t,s)|^2ds\right]  \\ \qquad\qquad \qquad
\le 2C c_f^2 \E\left[\int_t^T\left(|Y_{n-1}(s)-Y_{m-1}(s)|^2+|\delta Z(t,s)|^2\right)ds\right].
\end{array}
\end{eqnarray*}
Therefore,
\begin{eqnarray*}\begin{array}{lll}
\E\left[\int_0^Te^{\theta t}|\delta K(t,T)|^2dt\right]\le 2C c_f^2\E\left[\int_0^Te^{\theta t}\int_t^T|Y_{n-1}(s)-Y_{m-1}(s)|^2dsdt\right]\\ \qquad\qquad\qquad\qquad\qquad\qquad\qquad\qquad  +2C c_f^2\E\left[\int_0^Te^{\theta t}\int_t^T|\delta Z(t,s)|^2dsdt\right]\\ 
\le  \frac{2C c_f^2}{\theta}\E\left[\int_0^Te^{\theta s}|Y_{n-1}(s)-Y_{m-1}(s)|^2ds\right]+2C c_f^2\E\left[\int_0^Te^{\theta t}\int_t^T|\delta Z(t,s)|^2dsdt\right],
\end{array}
\end{eqnarray*}
since
$$
\begin{array}{lll}
\E\left[\int_0^Te^{\theta t}\int_t^T|Y_{n-1}(s)-Y_{m-1}(s)|^2dsdt\right]= \E\left[\int_0^T|Y_{n-1}(s)-Y_{m-1}(s)|^2ds\int_0^se^{\theta t}dt\right] \\ \qquad\qquad\qquad\qquad\qquad\qquad\qquad\qquad\quad \le \frac{1}{\theta}\E\left[\int_0^Te^{\theta s}|Y_{n-1}(s)-Y_{m-1}(s)|^2ds\right].
\end{array}
$$
Thus, by choosing $\theta> 2c^2_f(1+2T)$, it follows that $(Y_n,Z_n,K_n)_n$ is a Cauchy sequence w.r.t. the norm $\|\cdot\|_{\theta}$. 
\end{proof}

\section{Application to time-inconsistent optimal stopping problems}\label{OS}
Let $(X(t), t\in [0,T])$ be an $\Ff$-adapted process. In many financial applications $X$ may model the price of a commodity.  We propose to solve the following time-inconsistent optimal stopping problem associated with $(f,L,\xi)$ which satisfies the assumptions (A1), (A2) and (A3) where e.g. $\xi(t):=h(t,X(T)$:
\begin{equation}\label{os-1}
\underset{\tau\ge t}{\text{sup}}\, J(t,\tau),
\end{equation}
for
\begin{equation}\label{os-J}
J(t,\tau):=\E\left[\int_t^{\tau}f(t,s, X(s))ds+L(\tau)\mathbb{1}_{\{\tau<T\}}+\xi(t)\mathbb{1}_{\{\tau=T\}}\right], 
\end{equation}
where the supremum is taken over $\Ff$-stopping times $\tau$ taking values in $[t,T]$.  More precisely, we would like to find an  $\Ff$ stopping time  $\tau_t^*$, indexed by $t$, such that 
$$
\tau^*_t=\underset{\tau\ge t}{\arg\max}\, J(t,\tau).
$$

The dependence of both the driver $f$ and the terminal value $\xi$ on $t$ makes the optimal stopping problem \eqref{os-1} time-inconsistent because then the associated value-function does not satisfy the dynamic programming principle i.e. it is not a supermartingale. 

Define  $(Y(t), t\in [0,T])$ by 
\begin{equation}\label{os-2}
Y(t):=\underset{\tau\ge t}{\text{ess sup}}\,\E\left[\int_t^{\tau}f(t,s, X(s))ds+L(\tau)\mathbb{1}_{\{\tau<T\}}+\xi(t)\mathbb{1}_{\{\tau=T\}}\Big|\,\F_t\right]. 
\end{equation}

In time-consistent optimal stopping problems (i.e. when $f$ and $\xi$ do not depend of $t$), the process $Y$ would be the value function of the stopping problem \eqref{os-1}. In particular $Y(0)=\underset{\tau\ge 0}{\text{sup}}\, J(0,\tau)$. This is not the case here. However, we have
$$
\underset{\tau\ge t}{\text{sup}}\, J(t,\tau)\le \E[Y(t)]. 
$$
Now, if we can find an $\Ff$-stopping time $\tau_t^*$ such that
$$
Y(t)=\E\left[\int_t^{\tau^*_t}f(t,s, X(s))ds+L(\tau^*_t)\mathbb{1}_{\{\tau^*_t<T\}}+\xi(t)\mathbb{1}_{\{\tau^*_t=T\}}\Big|\,\F_t\right]
$$
it follows that $\tau^*_t$ an optimal strategy for $J(t,\cdot)$ since
\begin{equation}\label{os-3}
J(t,\tau^*_t)=\E[Y(t)]\le \underset{\tau\ge t}{\text{sup}}\, J(t,\tau)\le \E[Y(t)].
\end{equation}

In view of Proposition \ref{const}, there exists a unique process $(Z,K)\in \Lf^2\times \hH^2$ such that 
$$
Y(t)=\xi(t)+\int_{t}^{T}f(t,s, X(s))ds+\int_t^TK(t,ds)-\int_{t}^{T} Z(t,s)dW(s),
$$
and $(Y,Z,K)$ satisfies the properties (a), (b) and (c) of Section \ref{prem}. Moreover, $Y$ is constructed so that, for every $t\in [0,T]$, $Y(t)=\widetilde{Y}(t,t)$, where $\widetilde{Y}(t,\cdot)$ is the accompanying reflected BSDE associated with $(f(t,s,X(s)), L(s),\xi(t))$, parametrized by $t$, which satisfies
$$
\widetilde{Y}(t,u)=\xi(t)+\int_{u}^{T}f(t,s, X(s))ds+\int_u^TK(t,ds)-\int_{u}^{T} Z(t,s)dW(s),\quad u\in [t,T],
$$
and for which $(\widetilde{Y}(t,\cdot),Z(t,\cdot), K(t,\cdot))$ satisfies the conditions (d), (e) and (f) displayed in the proof of Proposition \ref{const}, above. 

\begin{proposition}\label{opts} Suppose the assumptions (A1), (A2) and (A3) are satisfied. For each $t\in [0,T]$, denote by $\tau^*_t$ the stopping time
$$
\tau^*_t=\inf\{t\le u\le T;\,\, \widetilde{Y}(t,u)=L(u) \}
$$
with the convention that $\tau^*_t=T$ if $\, \widetilde{Y}(t,u)>L(u),\,\, t\le u\le T$.

Then $\tau^*_t$ is optimal in the sense that
\begin{equation}
Y(t)=\E\left[\int_t^{\tau^*_t}f(t,s, X(s))ds+L(\tau^*_t)\mathbb{1}_{\{\tau^*_t<T\}}+\xi(t)\mathbb{1}_{\{\tau^*_t=T\}}\Big|\,\F_t\right].
\end{equation}
Moreover, $\tau^*_t$  is an optimal  strategy for $J(t,\cdot)$ i.e. 
$$
\tau^*_t=\underset{\tau\ge t}{\arg\max}\, J(t,\tau).
$$
\end{proposition}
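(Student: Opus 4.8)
The plan is to reduce the optimality of $\tau^*_t$ for the time-inconsistent problem $J(t,\cdot)$ to the classical optimal stopping property of the Snell envelope, which is available here because, for each \emph{fixed} $t$, the accompanying process $\widetilde{Y}(t,\cdot)$ is precisely the Snell envelope of $\Gamma(t,\cdot)$ constructed in the proof of Proposition~\ref{const}. First I would recall from that proof that $X_t(u):=\widetilde{Y}(t,u)+\int_t^uf(t,s,X(s))ds$ is the Snell envelope of $(\Gamma(t,u))_{0\le u\le T}$, where $\Gamma(t,u)=\int_t^uf(t,s,X(s))ds+L(u)\mathbb{1}_{\{u<T\}}+\xi(t)\mathbb{1}_{\{u=T\}}$, and that $\Gamma(t,\cdot)$ and $\widetilde{Y}(t,\cdot)$ are uniformly integrable. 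By the classical theory of the Snell envelope (e.g. \cite{elkaroui}, \cite{zhang}), the stopping time $D_t:=\inf\{u\ge t:\ X_t(u)=\Gamma(t,u)\}$ is optimal, i.e. $X_t(t)=\E[\Gamma(t,D_t)\mid\F_t]$, and $(X_t(u\wedge D_t))$ is a martingale on $[t,T]$. Since $\int_t^uf(t,s,X(s))ds$ is continuous and cancels on both sides, $X_t(u)=\Gamma(t,u)$ is equivalent to $\widetilde{Y}(t,u)=L(u)$ for $u<T$ and is automatic at $u=T$; hence $D_t=\tau^*_t$ with the stated convention, and $X_t(t)=\widetilde{Y}(t,t)=Y(t)$.

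With this identification in hand, the first assertion follows directly:
\begin{equation*}
Y(t)=X_t(t)=\E\bigl[\Gamma(t,\tau^*_t)\mid\F_t\bigr]
=\E\left[\int_t^{\tau^*_t}f(t,s,X(s))ds+L(\tau^*_t)\mathbb{1}_{\{\tau^*_t<T\}}+\xi(t)\mathbb{1}_{\{\tau^*_t=T\}}\,\Big|\,\F_t\right].
\end{equation*}
Here I should note that $Y$ in this section is built via \eqref{os-2} with the frozen driver $f(t,s,X(s))$ (which plays the role of the $(Y,Z)$-independent driver in Proposition~\ref{const}), so the representation \eqref{Y-rep} reads exactly as \eqref{os-2}; one must also check that $f(t,s,X(s))$ satisfies (A2-a), which holds because it does not depend on $(y,z)$ and is part of the standing assumptions here. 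Alternatively, I would remark that \eqref{os-2} already gives $Y(t)\ge\E[\Gamma(t,\tau)\mid\F_t]$ for every stopping time $\tau\ge t$, so only the reverse inequality at $\tau=\tau^*_t$ needs the Snell-envelope martingale stopping argument; both routes are short.

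Finally, to deduce that $\tau^*_t=\arg\max_{\tau\ge t}J(t,\tau)$, I would simply take expectations. From \eqref{os-J} and the tower property, $J(t,\tau^*_t)=\E\bigl[\E[\Gamma(t,\tau^*_t)\mid\F_t]\bigr]=\E[Y(t)]$, which is exactly the chain \eqref{os-3}: using $J(t,\tau)=\E[\E[\Gamma(t,\tau)\mid\F_t]]\le\E[Y(t)]$ for all admissible $\tau$ (immediate from \eqref{os-2}), we get $J(t,\tau^*_t)=\E[Y(t)]\ge\sup_{\tau\ge t}J(t,\tau)\ge J(t,\tau^*_t)$, forcing equality and hence optimality of $\tau^*_t$ for $J(t,\cdot)$. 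I do not expect a serious obstacle in this proof; the only point requiring care is the translation between ``$X_t(u)$ hits $\Gamma(t,u)$'' and ``$\widetilde{Y}(t,u)$ hits $L(u)$,'' i.e. confirming that the Snell envelope's first-hitting-time characterization of the optimal stopping time transfers cleanly to $\tau^*_t$ as defined, including the behavior at the terminal time $T$ and the measurability of $\tau^*_t$, all of which are standard since $\widetilde{Y}(t,\cdot)$ and $L$ are continuous and $\Ff$-adapted.
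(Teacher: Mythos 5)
Your proof is correct, and it reaches the conclusion by a somewhat different route than the paper. The paper works directly with the reflected-BSDE decomposition of $\widetilde{Y}(t,\cdot)$: it writes
$Y(t)=\widetilde{Y}(t,\tau^*_t)+\int_t^{\tau^*_t}f(t,s,X(s))\,ds+\int_t^{\tau^*_t}K(t,ds)-\int_t^{\tau^*_t}Z(t,s)\,dW(s)$,
kills the $K$-term via the Skorohod flatness condition (f) together with the continuity of $u\mapsto K(t,u)$ (since $\widetilde{Y}(t,u)>L(u)$ on $[t,\tau^*_t)$), kills the stochastic integral by taking conditional expectation, and identifies $\widetilde{Y}(t,\tau^*_t)=L(\tau^*_t)\mathbb{1}_{\{\tau^*_t<T\}}+\xi(t)\mathbb{1}_{\{\tau^*_t=T\}}$ by continuity of $\widetilde{Y}(t,\cdot)$. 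You instead invoke the classical first-hitting-time optimality theorem for the Snell envelope $X_t(\cdot)$ of $\Gamma(t,\cdot)$ and translate the contact set of $X_t$ with $\Gamma(t,\cdot)$ into the contact set of $\widetilde{Y}(t,\cdot)$ with $L$; your observation that the two contact times coincide (the drift cancels, and contact is automatic at $T$) is the right bookkeeping. The two arguments rest on the same underlying fact --- the Snell envelope stopped at first contact with the reward is a martingale, which is exactly what the flatness of $K(t,\cdot)$ before $\tau^*_t$ encodes --- so the difference is one of packaging: your version outsources the work to the cited optimal-stopping theorem (and therefore needs $\Gamma(t,\cdot)$ continuous and of class (D), which the estimates in the proof of Proposition \ref{const} supply), while the paper's version makes the mechanism explicit through the BSDE representation it has already constructed. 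The final step, deducing $\tau^*_t=\arg\max_{\tau\ge t} J(t,\tau)$ from \eqref{os-2} and the chain \eqref{os-3}, is identical in both arguments.
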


\begin{proof} We have
$$
\begin{array}{lll}
Y(t)=\xi(t)+\int_{t}^{T}f(t,s,X(s))ds+\int_t^TK(t,ds)-\int_{t}^{T} Z(t,s)dW(s) \\ \qquad 
=\widetilde{Y}(t,\tau^*_t)+\int_{t}^{\tau^*_t}f(t,s,X(s))ds+\int_t^{\tau^*_t}K(t,ds)-\int_{t}^{\tau^*_t} Z(t,s)dW(s).
\end{array}
$$
Now, Skorohod flatness condition (f) along with the continuity of the map $u\mapsto K(t,u)$ imply that 
$$
\int_t^{\tau^*_t} K(t,ds)=0.
$$
Thus, taking conditional expectation we finally obtain 
$$
Y(t)=\E\left[\int_t^{\tau^*_t}f(t,s, X(s))ds+L(\tau^*_t)\mathbb{1}_{\{\tau^*_t<T\}}+\xi(t)\mathbb{1}_{\{\tau^*_t=T\}}\Big|\,\F_t\right],
$$ 
since, by continuity of the map $u\mapsto\widetilde{Y}(t,u)$, we have  $\widetilde{Y}(t,\tau^*_t)=L(\tau^*_t)\mathbb{1}_{\{\tau^*_t<T\}}+\xi(t)\mathbb{1}_{\{\tau^*_t=T\}}$. In view of \eqref{os-3}, it follows that $\tau^*_t=\underset{\tau\ge t}{\arg\max}\, J(t,\tau).$

\end{proof}

\begin{remark}
The choice of stopping time $\tau_t^*$ as the first hitting time of the accompanying  Snel envelope $\widetilde{Y}(t,\cdot)$ of the obstacle $L$ instead of the value function $Y$, as it is the case for standard reflected BSDEs, is simply due to fact that for Volterra type equations we have 
$$
Y(t)\neq Y(u)+\int_u^Tf(t,s, Y(s),Z(t,s))ds+\int_u^TK(t,ds)-\int_u^T Z(t,s)dW(s),\quad u\ge t.
$$

\end{remark}

\end{document}